\renewcommand*{\backref}[1]{}
\renewcommand*{\backrefalt}[4]{%
    \ifcase #1 (Not cited.)%
    \or        (p.\,#2)%
    \else      (pp.\,#2)%
    \fi}
\DeclareMathOperator{\li}{li}
\DeclareMathOperator{\re}{Re}
\begin{document}

\newtheorem{theorem}{Theorem}
\newtheorem{Remark}{Remark}
\newtheorem{lemma}[theorem]{Lemma}
\newtheorem{claim}[theorem]{Claim}
\newtheorem{conj}[theorem]{Conjecture}
\newtheorem{cor}[theorem]{Corollary}
\newtheorem{prop}[theorem]{Proposition}
\newtheorem{definition}{Definition}
\newtheorem{question}[theorem]{Question}
\newcommand{\hh}{{{\mathrm h}}}

\numberwithin{equation}{section}
\numberwithin{theorem}{section}

\def\sssum{\mathop{\sum\!\sum\!\sum}}
\def\ssum{\mathop{\sum\ldots \sum}}

\def \balpha{\boldsymbol\alpha}
\def \bbeta{\boldsymbol\beta}
\def \bgamma{{\boldsymbol\gamma}}
\def \bomega{\boldsymbol\omega}

\newcommand{\Res}{\mathrm{Res}\,}
\newcommand{\Gal}{\mathrm{Gal}\,}

\def\sssum{\mathop{\sum\!\sum\!\sum}}
\def\ssum{\mathop{\sum\ldots \sum}}
\def\dsum{\mathop{\sum\  \sum}}
\def\iint{\mathop{\int\ldots \int}}

\def\squareforqed{\hbox{\rlap{$\sqcap$}$\sqcup$}}
\def\qed{\ifmmode\squareforqed\else{\unskip\nobreak\hfil
\penalty50\hskip1em\null\nobreak\hfil\squareforqed
\parfillskip=0pt\finalhyphendemerits=0\endgraf}\fi}

\newfont{\teneufm}{eufm10}
\newfont{\seveneufm}{eufm7}
\newfont{\fiveeufm}{eufm5}
%
%
\newfam\eufmfam
     \textfont\eufmfam=\teneufm
\scriptfont\eufmfam=\seveneufm
     \scriptscriptfont\eufmfam=\fiveeufm
%
%
\def\frak#1{{\fam\eufmfam\relax#1}}

\def\fK{\mathfrak K}
\def\fT{\mathfrak{T}}

\def\fA{{\mathfrak A}}
\def\fB{{\mathfrak B}}
\def\fC{{\mathfrak C}}
\def\fD{{\mathfrak D}}
\def\fM{{\mathfrak M}}
\def\fR{{\mathfrak R}}

\newcommand{\sX}{\ensuremath{\mathscr{X}}}

\def\vec#1{\mathbf{#1}}
\def\dist{\mathrm{dist}}
\def\vol#1{\mathrm{vol}\,{#1}}

\def\squareforqed{\hbox{\rlap{$\sqcap$}$\sqcup$}}
\def\qed{\ifmmode\squareforqed\else{\unskip\nobreak\hfil
\penalty50\hskip1em\null\nobreak\hfil\squareforqed
\parfillskip=0pt\finalhyphendemerits=0\endgraf}\fi}

\def\sA{\mathscr A}
\def\sB{\mathscr B}
\def\sC{\mathscr C}
\def\sD{\Delta}
\def\sE{\mathscr E}
\def\sF{\mathscr F}
\def\sG{\mathscr G}
\def\sH{\mathscr H}
\def\sI{\mathscr I}
\def\sJ{\mathscr J}
\def\sK{\mathscr K}
\def\sL{\mathscr L}
\def\sM{\mathscr M}
\def\sN{\mathscr N}
\def\sO{\mathscr O}
\def\sP{\mathscr P}
\def\sQ{\mathscr Q}
\def\sR{\mathscr R}
\def\sS{\mathscr S}
\def\sU{\mathscr U}
\def\sT{\mathscr T}
\def\sV{\mathscr V}
\def\sW{\mathscr W}
\def\sX{\mathscr X}
\def\sY{\mathscr Y}
\def\sZ{\mathscr Z}

\def\cA{{\mathcal A}}
\def\cB{{\mathcal B}}
\def\cC{{\mathcal C}}
\def\cD{{\mathcal D}}
\def\cE{{\mathcal E}}
\def\cF{{\mathcal F}}
\def\cG{{\mathcal G}}
\def\cH{{\mathcal H}}
\def\cI{{\mathcal I}}
\def\cJ{{\mathcal J}}
\def\cK{{\mathcal K}}
\def\cL{{\mathcal L}}
\def\cM{{\mathcal M}}
\def\cN{{\mathcal N}}
\def\cO{{\mathcal O}}
\def\cP{{\mathcal P}}
\def\cQ{{\mathcal Q}}
\def\cR{{\mathcal R}}
\def\cS{{\mathcal S}}
\def\cT{{\mathcal T}}
\def\cU{{\mathcal U}}
\def\cV{{\mathcal V}}
\def\cW{{\mathcal W}}
\def\cX{{\mathcal X}}
\def\cY{{\mathcal Y}}
\def\cZ{{\mathcal Z}}
\newcommand{\rmod}[1]{\: \mbox{mod} \: #1}

\def\vr{\mathbf r}

\def\e{{\mathbf{\,e}}}
\def\ep{{\mathbf{\,e}}_p}
\def\em{{\mathbf{\,e}}_m}
\def\en{{\mathbf{\,e}}_n}

\def\Tr{{\mathrm{Tr}}}
\def\Nm{{\mathrm{Nm}}}
\def\supp{{\mathrm{supp}}}

\def\rE{{\mathrm{E}}}

 \def\SS{{\mathbf{S}}}

\def\lcm{{\mathrm{lcm}}}

\def\({\left(}
\def\){\right)}
\def\fl#1{\left\lfloor#1\right\rfloor}
\def\rf#1{\left\lceil#1\right\rceil}

\def\mand{\qquad \mbox{and} \qquad}

\newcommand{\commF}[1]{\marginpar{%
\begin{color}{red}
\vskip-\baselineskip 
\raggedright\footnotesize
\itshape\hrule \smallskip F: #1\par\smallskip\hrule\end{color}}}

\newcommand{\commI}[1]{\marginpar{%
\begin{color}{blue}
\vskip-\baselineskip 
\raggedright\footnotesize
\itshape\hrule \smallskip I: #1\par\smallskip\hrule\end{color}}}

\newcommand{\commP}[1]{\marginpar{%
\begin{color}{magenta}
\vskip-\baselineskip 
\raggedright\footnotesize
\itshape\hrule \smallskip P: #1\par\smallskip\hrule\end{color}}}

\newcommand{\commO}[1]{\marginpar{%
\begin{color}{cyan}
\vskip-\baselineskip 
\raggedright\footnotesize
\itshape\hrule \smallskip O: #1\par\smallskip\hrule\end{color}}}




\hyphenation{re-pub-lished}
\hyphenation{ne-ce-ssa-ry}

\parskip 4pt plus 2pt minus 2pt

\def\bfdefault{b}
\overfullrule=5pt

\def \F{{\mathbb F}}
\def \K{{\mathbb K}}
\def \L{{\mathbb L}}
\def \Z{{\mathbb Z}}
\def \Q{{\mathbb Q}}
\def \R{{\mathbb R}}
\def \C{{\mathbb C}}
\def\Fp{\F_p}
\def \fp{\Fp^*}

\def \fP{\mathfrak P}
\def \fQ{\mathfrak Q}

\title{Denominators of Bernoulli polynomials}

\author[O. Bordell\`{e}s]{Olivier Bordell\`{e}s}
\address{O.B.: 2 All\'ee de la combe, 43000 Aiguilhe, France}
\email{borde43@wanadoo.fr}

\author[F. Luca] {Florian~Luca}

\address{F.L.:  School of Mathematics, University of the Witwatersrand, Private Bag X3, Wits 2050, South Africa; Max Planck Institute for Mathematics, Vivatsgasse 7, 53111 Bonn, Germany; 
Department of Mathematics, Faculty of Sciences, University of Ostrava, 30 Dubna 22, 701 03
Ostrava 1, Czech Republic}
\email{florian.luca@wits.ac.za}

\author[P.  Moree]{Pieter Moree}
\address{P.M.: Max Planck Institute for Mathematics, Vivatsgasse 7, 53111 Bonn, Germany}
\email{moree@mpim-bonn.mpg.de}

\author[I. E.  Shparlinski]{Igor E. Shparlinski}

\address{I.S.: Department of Pure Mathematics, University of New South Wales\\
2052 NSW, Australia.}
\email{igor.shparlinski@unsw.edu.au}

\begin{abstract}
For a positive integer $n$ let 
$$
\fP_n=\prod_{\substack{p\\ s_p(n)\ge p}} p,
$$
where $p$ runs over primes and  $s_p(n)$ is the sum of the base $p$ digits of $n$. For all $n$ 
we prove that $\fP_n$ is divisible by all ``small" primes with at most one exception. We also show that $\fP_n$ is large, 
has  many prime factors exceeding $\sqrt{n}$, 
with the largest one
exceeding $n^{20/37}$. We establish Kellner's conjecture, which says
that the number of prime factors exceeding $\sqrt{n}$ grows asymptotically as $\kappa \sqrt{n}/\log n$ 
for some constant $\kappa$ with $\kappa=2$.
Further, we  compare the sizes of $\fP_n$ and $\fP_{n+1}$, leading to the somewhat 
surprising conclusion that although 
$\fP_n$ tends to infinity with $n$, the inequality $\fP_n>\fP_{n+1}$
is more frequent than its 
reverse.
\end{abstract}

\maketitle

\section{Introduction}

\subsection{Motivation}
\label{sec:mot}
For positive integers $n$ and $b\ge 2$ let $s_b(n)$ be the sum of the base $b$-digits of $n$. The product
$$
\fP_n=\prod_{p\text{~prime}:~s_p(n)\ge p} p
$$
has been  introduced  by  Kellner and Sondow~\cite{KeSo}.
Although a priori this could be an infinite product, it is actually a finite product
which terminates for $p >(n+1)/2$, see~\cite{Kel,KeSo}.

The relevance of this quantity is due to its link with denominators of   {\it Bernoulli polynomials\/}
$$
B_n(X) = \sum_{k=0}^n \binom{n}{k} B_k X^{n-k}, 
$$
where $B_k$ is the $k^{\text{th}}$ Bernoulli number. We also define the polynomials  
$$
\widetilde B_n(X) = B_n(X) - B_n
$$
which are of interest due to their connection to power sums, 
namely we have
$$
\sum_{j=1}^{N-1} j^{n-1} = \frac{\widetilde B_n(N)}{N},
$$
see~\cite{Kel,KeSo,KeSoAP}. 
It is shown in~\cite{Kel,KeSo} that the denominator of the polynomial 
 $\widetilde B_n(X) $ is $\fP_n$, thus
$$
\fP_n\widetilde B_n(X)  \in \Z[X].
$$

The celebrated  von Staudt--Clausen theorem, see~\cite[Theorem~118]{HardyWright}, 
fully describes the denominator 
of  $B_n$ for an even $n$ as the product of prime $p$ with $p-1$ 
dividing $n$:
$$
\fQ_n = \prod_{\substack{p:~p-1\mid n}} p.
$$
Recall that $B_1 = 1$ and $B_n= 0$ and thus $\fQ_n =1$ for odd $n > 1$. 
One thus sees  that the denominator of   $ B_n(X)$ is 
$\lcm\left[ \fP_n, \fQ_n\right]$. 

In this paper, we prove some results about small and large prime factors of $\fP_n$. Kellner~\cite{Kel} has also introduced and
studied the decomposition 
$$
\fP_n=\fP_n^{-}\cdot \fP_n^{+},
$$
where
$$
 \fP_n^{-}=\prod_{\substack{p<{\sqrt{n}} \\ s_p(n)\ge p}} p\mand \fP_n^{+}=\prod_{\substack{p>{\sqrt{n}}\\ s_p(n)\ge p}} p.
$$
Note that the definitions of $\fP_n^{\pm}$, with strict inequalities on $p$ in both, are correct 
since $s_p(p^2)=1$ if $n = p^2$ with $p$ a prime.
Hence, $p\nmid \fP_n$,  even if $p={\sqrt{n}}$ holds for a prime $p$. 
Motivated by the link with Bernoulli 
polynomials Kellner~\cite{Kel}
has initiated the study of the arithmetic structure of $\fP_n^{-}$ and  $\fP_n^{+}$.

Let, as usual,  $\omega(m)$ and $P(m)$ be the number of distinct prime factors of $m$ and the largest prime factor of $m$, respectively. 

\begin{conj}\rm{(Kellner~\cite[Conjecture~1]{Kel}.)}
\label{conj:Kell1} For $n > 192$ we have 
$P(\fP_n) > \sqrt{n}$.
\end{conj}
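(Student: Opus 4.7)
The first step is to put the divisibility $p \mid \fP_n$ into a more tractable form for primes $p > \sqrt n$. Writing $n = ap + b$ with $0 \le b < p$, one has $a = \lfloor n/p \rfloor < \sqrt n$, and the condition $s_p(n) = a + b \ge p$ is readily seen to hold iff there is an integer $c \in \{1, \dots, \lfloor n/p \rfloor\}$ with $p \mid n + c$. Setting $m = (n + c)/p$, this is the same as saying that $p$ is a prime factor of some $n + c$ ($c \ge 1$) whose cofactor satisfies $m \ge c + 1$, equivalently $\sqrt n < p \le (n + c)/(c + 1)$. Thus the conjecture is equivalent to the assertion that, for every $n > 192$, at least one of the integers $n + 1, n + 2, \dots$ admits a prime factor lying in some window $(\sqrt n,(n+c)/(c+1)]$.

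With this reformulation, my plan is to split the argument by the size of $n$. For $n$ above an explicit threshold $N_0$, I would appeal to the paper's stronger main result that $P(\fP_n) > n^{20/37}$; since $20/37 > 1/2$, this gives $P(\fP_n) > \sqrt n$ at once. Even better, the asymptotic $\omega(\fP_n^+) \sim 2\sqrt n / \log n$ established in the paper shows that, for all large $n$, not one but many such primes exist simultaneously. For the remaining finite range $192 < n \le N_0$, I would verify the conjecture computationally: for each such $n$, one can iterate over primes $p \in (\sqrt n, n]$ and test whether $s_p(n) \ge p$, or equivalently factor $n + 1, n + 2, \dots, n + \lfloor \sqrt n \rfloor$ and look for a prime factor in the appropriate window.

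The hard part will be obtaining an effective $N_0$ small enough to make the computational step feasible. The proof of $P(\fP_n) > n^{20/37}$ presumably rests on quantitative input on primes in short intervals of Baker--Harman--Pintz type, and extracting explicit constants from such results is notoriously delicate. A tempting elementary alternative would be to show directly that for every $n > 192$ some $c$ in a bounded range $\{1, \dots, K\}$ already suffices; but ruling out the pathology in which each of $n + 1, \dots, n + K$ is simultaneously either prime, $\sqrt n$-smooth, or of the form ``small integer $\times$ prime'', uniformly in $n$, seems out of reach by elementary means.
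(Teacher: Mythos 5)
Your proposal is correct and takes essentially the same route as the paper: the authors likewise deduce the conjecture from Theorem~\ref{thm:2} (i.e.\ $P(\fP_n^+)\gg n^{20/37}$ with $20/37>1/2$, proved via the Baker--Harman result on fractional parts of $n/p$ for primes in a dyadic window around $n^{20/37}$), and they too only obtain the statement for $n\ge n_0$ with an unspecified $n_0$, explicitly leaving the numerical threshold $192$ unverified. Your reformulation of $p\mid\fP_n$ for $p>\sqrt{n}$ via a $c$ with $p\mid n+c$ and $c\le\fl{n/p}$ is correct, and your assessment that the remaining obstacle is an effective $n_0$ matches exactly what the paper does not supply.
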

Conjecture~\ref{conj:Kell1} is equivalent with the
conjecture that for $n>192$ we have $ \fP_n^{+}> 1$ and
is established, 
up to the numerical value of 
the threshold $192$, in a much  stronger form in Theorem~\ref{thm:2}. 

\begin{conj}
\rm{(Kellner~\cite[Conjecture~2]{Kel}.)}
\label{conj:Kell2} There is  some absolute constant $\kappa > 0$ such that 
$$
 \omega(\fP_n^+)=(\kappa+o(1)) \frac{\sqrt{n}}{\log n}
$$
as $n\to\infty$. 
\end{conj}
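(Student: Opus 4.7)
The plan is to reformulate $\omega(\fP_n^+)$ as a count of primes in short intervals and then evaluate the resulting sum asymptotically by combining a dyadic decomposition with the prime number theorem.

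For a prime $p > \sqrt n$, writing $n = qp + r$ with $q = \lfloor n/p\rfloor < p$ and $r = n \bmod p \in [0,p)$, we get $s_p(n) = q+r$, so $s_p(n) \geq p$ forces $r \geq 1$ and is equivalent to $(q+1)p \leq n+q$. Setting $d = q+1$, this becomes $d \in [2,\lfloor\sqrt n\rfloor]$, $d \nmid n$, and $dp-n \in \{1,\ldots,d-1\}$, which in turn forces $p = \lceil n/d\rceil$. Hence
\begin{equation*}
 \omega(\fP_n^+) = \#\bigl\{\,d \in [2,\lfloor\sqrt n\rfloor]\ :\ d\nmid n,\ \lceil n/d\rceil \text{ prime}\,\bigr\}.
\end{equation*}
The constraint $d\nmid n$ excludes at most $\tau(n) = n^{o(1)}$ values of $d$ and is absorbed in the error. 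Dually, for $d \leq \sqrt n$ the map $d \mapsto \lceil n/d\rceil$ is injective into primes $p > \sqrt n$ (since $[n/p, n/(p-1))$ has length $<1$), and this count also equals the number of primes $p \in (\sqrt n, n]$ such that the interval $(n/p, n/(p-1))$ contains an integer.

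The heuristic is that the fractional parts $\{n/p\}$ are asymptotically uniform on $[0,1)$ as $p$ varies over primes, so the Diophantine condition holds with density $(n-p)/(p(p-1)) \sim n/p^2$. Summing by partial summation and the prime number theorem,
\begin{equation*}
 \sum_{\sqrt n<p\leq n}\frac{n}{p^2}\ \sim\ n\int_{\sqrt n}^n\frac{dt}{t^2\log t}\ =\ \sqrt n\int_0^{\frac12\log n}\frac{e^{-u}\,du}{\frac12\log n+u}\ \sim\ \frac{2\sqrt n}{\log n},
\end{equation*}
via the substitution $t = \sqrt n\cdot e^u$ together with the asymptotic $\int_0^\infty e^{-u}\,du/(a+u)\sim 1/a$ as $a\to\infty$. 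This identifies $\kappa = 2$.

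To turn the heuristic into a proof I would decompose the $d$-range dyadically into blocks $d \in (D,2D]$ with $D = 2^j$, $j = 0, 1, \ldots, \lfloor(\log_2 n)/2\rfloor$. In each block the candidates $\lceil n/d\rceil$ are distinct integers in the window $(n/(2D), n/D]$, and dually an integer $p$ in that window is counted precisely when the Diophantine condition above holds for it. The target per-block count is $D/\log(n/D)$, and summing over blocks reproduces $2\sqrt n/\log n$. The principal obstacle is the per-block estimate: because each window $(n/p, n/(p-1))$ has length strictly less than $1$, the prime number theorem in short intervals is not directly applicable, and one must establish an equidistribution statement (of Bombieri--Vinogradov or large-sieve type) showing that for every sufficiently large $n$ the primes in each window satisfy the Diophantine condition with the predicted density. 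A naive term-by-term application of PNT to the alternative expression $\omega(\fP_n^+)=\sum_{k\geq 2}(\pi((n+k-1)/k)-\pi(n/k))$ yields errors of order $n/\log n$, dwarfing the main term of order $\sqrt n/\log n$, so genuine cancellation among the $k$'s must be exploited.
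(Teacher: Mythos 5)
Your reformulation of $\omega(\fP_n^+)$ as a count of primes $p=\lceil n/d\rceil$ with $d\le\sqrt n$, and equivalently as the condition that $(n/p,(n-1)/(p-1)]$ contain an integer, is correct and is in substance the paper's own starting point: the paper writes $\omega(\fP_n^+)=\sum_{\sqrt n<p\le n}\bigl(\lfloor (n-1)/(p-1)\rfloor-\lfloor n/p\rfloor\bigr)$ and evaluates the expected main term $\sum_p (n-p)/(p(p-1))$ by partial summation against the prime number theorem, arriving at $n\rE_1(\log\sqrt n)\sim 2\sqrt n/\log n$, so your identification of $\kappa=2$ is right. But the proposal stops exactly where the proof begins. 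You write that ``one must establish an equidistribution statement (of Bombieri--Vinogradov or large-sieve type)'' for the fractional parts $\{n/p\}$ in each dyadic block; that statement is the entire content of the theorem, and nothing in the proposal supplies it. Moreover, Bombieri--Vinogradov and large-sieve inputs are the wrong tools here: they control averages over moduli or over $n$, whereas the conjecture must hold for \emph{every} large $n$, with no averaging available. What is actually needed is a pointwise, uniform-in-$n$ bound with genuine cancellation in the exponential sums $\sum_{M<p\le 2M}\e(hn/p)$ for $M$ near $\sqrt n$ and $h$ up to a suitable power of $n$.

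Concretely, the paper fills this gap in three steps: (a) Vaaler's trigonometric approximation of the sawtooth $\psi$ reduces the discrepancy term $\sum_p\bigl(\psi(n/p+(n-p)/(p(p-1)))-\psi(n/p)\bigr)$ to sums $\sum_{h\le H}h^{-1}\bigl|\sum_{M<p\le 2M}\e(nh/p)\e(hg(p))\bigr|$; (b) Abel summation converts these to sums weighted by the von Mangoldt function $\Lambda$; (c) the explicit van der Corput--Vinogradov type estimate of Granville and Ramar\'e, $\bigl|\sum_{M<m\le N}\Lambda(m)\e(x/m)\bigr|\ll (x^2M^{19})^{1/24}(\log M)^{11/4}$, gives a power saving, yielding an error $O(n^{49/100+o(1)})$ for this piece. (The range of very large $p$, i.e.\ very small $d$, where equidistribution cannot hold block by block, is handled separately by a crude divisor-counting upper bound, which suffices because its total contribution is $o(\sqrt n/\log n)$.) Without an ingredient of type (c), or some substitute giving cancellation in $\sum_p\e(hn/p)$ uniformly in $n$, the argument does not close; as you yourself observe, term-by-term use of the prime number theorem produces errors of size $n/\log n$, which swamp the main term.
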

In this paper we will show that
this conjecture is true 
with $\kappa=2$. Moreover, we provide
a much sharper estimate 
(Theorem \ref{thm:3})
for $\omega(\fP_n^+)$ than conjectured
by Kellner.

Aside from our results on these two 
conjectures,  we improve some of the results on $\fP_n$ and $\fP_n^\pm$ that have been given by Kellner~\cite{Kel}
and  also obtain several new results.
\par After introducing some further required 
notation in Section \ref{sec:notation},
we will state our results in 
Section \ref{sec:results}.

\subsection{Notation}
\label{sec:notation}
For a real number $x$ we write $\lfloor x\rfloor$ and $\{x\}$ for its integer and fractional parts, respectively.
\par For a positive integer $k$ and a positive real number $x$ we write $\log_k x$ for 
the iteratively defined function given by $\log_1 x=\max\{1,\ln x\}$, where $\ln x$ is a natural logarithm 
of $x$ and $\log_k x=\max\{1,\log_{k-1} x\}$ 
for $k\ge 2$. We will also use the functions 
$\e(x) = \exp(2 \pi i x)$
and $\psi(x) = x - \fl x - \frac{1}{2}$.

\par We recall the definitions of $\omega(m)$ and $P(m)$ from Section~\ref{sec:mot} as the number of distinct prime factors of $m$ and the largest prime factor of $m$, respectively. Another standard notation we use is $\pi(x)$  for the counting function of primes $p\le x$. 

We also define $\delta$ to be the {\it Erd{\H o}s--Ford--Tenenbaum constant\/}
\begin{equation}
\label{eq:ErdConst}
\delta=1-(1+\ln\ln 2)/\ln 2 = 0.08607 \ldots .
\end{equation} 

Throughout the paper, the letters $p$ and $q$ always denote a prime number. 

Define
\begin{equation}
   \label{eq:delta}
   \delta_c(x) = e^{- c (\log x)^{3/5} (\log_2 x)^{-1/5}} \quad \( x > e, \ c > 0 \). 
\end{equation}
Note that $\delta_c(x) \log x \ll \delta_{c_0}(x)$ for any $0 < c_0 < c$. 

The exponential integral is given by
\begin{equation}
   \label{eq:intexp}
   \rE_1 (x) = \int_x^\infty \frac{e^{-t}}{t} \, \mathrm{d}t \quad \( x > 0 \), 
\end{equation}
see~\cite[Eq.~(5.1.1)]{Abr}. 

As usual $A= O(B)$,  $A \ll B$, $B \gg A$ are equivalent to $|A| \le c |B|$ for some 
 {\it absolute\/}  constant $c> 0$, whereas $A=o(B)$ means that $A/B\to 0$.

\subsection{Results}
\label{sec:results}

Our first result shows that all ``small" primes $p$, with at most one exception, divide $\fP_n$, where ``small" depends on $n$ in a way which is made precise in the following statement.
\begin{theorem}
\label{thm:1}
For any fixed  $\varepsilon>0$ there exists $n_{\varepsilon}$ such that  for all $n\ge n_{\varepsilon}$, 
all  primes $p\le (1/2-\varepsilon)\log_2 n/\log_3 n$, with at most
one exception, divide $\fP_n$.
\end{theorem}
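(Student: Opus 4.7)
The plan is a dyadic second-moment argument: I will show that for $N$ sufficiently large in terms of $\varepsilon$, no integer $n \in (N/2, N]$ can have two ``bad'' primes $p < q \le P(N) := (1/2-\varepsilon)\log_2 N/\log_3 N$ (where ``bad'' means $s_p(n) < p$, equivalently $p \nmid \fP_n$). A dyadic iteration across $N$ then yields the theorem.

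\emph{Single-prime count.} Put $S_p(N) = \{n \le N : s_p(n) < p\}$. Each $n \in S_p(N)$ is determined by its base-$p$ digit vector $(a_0,\dots,a_D)$ with $D = \fl{\log_p N}$, subject to $\sum_i a_i \le p-1$ (the upper bound $a_i \le p-1$ being automatic from the digit-sum constraint). Counting such tuples and applying Stirling yields, uniformly for $p \le P(N)$,
\[
|S_p(N)| \le \binom{p+D}{p-1} \le \exp\bigl((p-1)\log_2 N\,(1+o(1))\bigr).
\]

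\emph{Second moment and conclusion.} Let $B(n) = \{p \le P(N) : s_p(n) < p\}$. The elementary inequality $\mathbf{1}[|B(n)| \ge 2] \le \binom{|B(n)|}{2}$ summed over $n$ gives
\[
\#\{n \in (N/2, N] : |B(n)| \ge 2\} \le \sum_{p < q \le P(N)} |S_p \cap S_q \cap (N/2, N]|.
\]
The crucial step is the ``independence'' bound
\begin{equation}
|S_p \cap S_q \cap (N/2, N]| \ll \frac{|S_p(N)|\,|S_q(N)|}{N}.\label{eq:thm1-indep}
\end{equation}
Granting \eqref{eq:thm1-indep}, since $|S_p(N)|$ grows geometrically in $p$ (the ratio at consecutive primes is at least $\log N$, so $\sum_{p \le P(N)} |S_p(N)| \le 2|S_{P(N)}(N)|$), the right-hand side above is
\[
\ll \frac{|S_{P(N)}(N)|^2}{N} \le \exp\!\biggl((1-2\varepsilon)\frac{(\log_2 N)^2}{\log_3 N} - \log N\biggr).
\]
Since $(\log_2 N)^2/\log_3 N = o(\log N)$, this tends to $0$, and being a nonnegative integer it equals $0$ for $N \ge N_\varepsilon$. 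Hence no $n \in (N/2, N]$ has two bad primes, and the dyadic iteration completes the proof with $n_\varepsilon = N_\varepsilon/2$.

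\emph{Main obstacle.} The bulk of the work lies in \eqref{eq:thm1-indep}, an approximate-independence estimate for the events $\{s_p(n) < p\}$ and $\{s_q(n) < q\}$. My approach would be Fourier-analytic: expand
\[
\mathbf{1}[s_q(n) < q] = \sum_{k=0}^{q-1} \int_0^1 \e(\alpha(s_q(n)-k))\,d\alpha,
\]
pair with $\mathbf{1}[n \in S_p]$, and bound the resulting exponential sums $\sum_{n \in S_p \cap (N/2, N]} \e(\alpha\,s_q(n))$ non-trivially for $\alpha \notin \Z$. The zero-frequency contribution supplies the main term $|S_p \cap (N/2, N]| \cdot |S_q(N)|/N$, while the non-zero frequencies should cancel via Gelfond-type equidistribution of base-$q$ digit sums along the sparse set $S_p$. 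This is the single hardest step; the remaining ingredients are routine Stirling, geometric-series, and dyadic bookkeeping.
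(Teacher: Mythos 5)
There is a genuine gap, and it sits exactly where you flagged it: the ``independence'' bound
$$
|S_p \cap S_q \cap (N/2, N]| \ll \frac{|S_p(N)|\,|S_q(N)|}{N}
$$
is not a hard technical lemma supporting the proof --- it \emph{is} the theorem. Since $|S_p(N)|\,|S_q(N)|/N = N^{-1+o(1)}\cdot N^{o(1)} \to 0$, this bound asserts that $S_p\cap S_q\cap(N/2,N]$ is eventually empty, i.e.\ that only finitely many $n$ satisfy $s_p(n)<p$ and $s_q(n)<q$ simultaneously. That is precisely the Senge--Straus theorem (in a strong quantitative form), and every known proof of it goes through Diophantine approximation: Ridout/subspace-type arguments for the ineffective version, or Baker-type lower bounds for linear forms in logarithms for the effective version. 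This is in fact what the paper does: it uses Stewart's effective refinement of Senge--Straus, reproved via Matveev's theorem so as to make the constant $C(a,b)$ explicit as a function of $\max\{a,b\}$, which is what allows the bases $p,q$ to grow with $n$ (up to $\log_2 n$). Your Fourier-analytic plan for the independence bound cannot deliver it. The set $S_p$ has size $N^{o(1)}$ (indeed $\exp(O((\log_2 N)^2/\log_3 N))$), so you would need to save a factor of essentially $N$ over the trivial bound $\min(|S_p|,|S_q|)$ by exhibiting cancellation in $\sum_{n\in S_p}\e(\alpha\, s_q(n))$; even perfect square-root cancellation in that sum saves only $|S_p|^{1/2}=N^{o(1)}$. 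Gelfond-type equidistribution of $s_q$ is known along dense, additively structured sets (intervals, progressions, with great effort the primes), not along sets of density $N^{-1+o(1)}$ defined by digit conditions in a different base --- that is exactly the two-base digit problem that resists all analytic approaches.

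The surrounding bookkeeping (the single-prime count $|S_p(N)|\le\binom{p+D}{p-1}$, the union/second-moment bound, the geometric domination by the largest prime, the dyadic iteration, and the observation that a quantity in $[0,1)$ counting integers must vanish) is all correct, and the threshold $(1/2-\varepsilon)$ emerges from your arithmetic for the same reason it does in the paper: two bad primes force $\log|S_p|+\log|S_q|\gtrsim\log N$, i.e.\ $p+q\gtrsim\log N/\log_2 N$... except that in your setup this implication is what is missing, whereas in the paper it is supplied by the lower bound $s_p(n)+s_q(n)>\log_2 n/(\log_3 n+C)$ coming from linear forms in logarithms. To repair your argument you would have to import that Diophantine input, at which point the second-moment scaffolding becomes unnecessary: the paper's route goes directly from the digit-sum lower bound to ``at most one exceptional prime'' without any counting over $n$.
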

\begin{Remark}
As $2\nmid \fP_{2^n}$, we see that the
exceptional prime sometimes exists.
\end{Remark}
Next we obtain reasonably tight upper and lower bounds on the 
number of prime divisors and the largest prime divisor of $\fP_n$.

\begin{theorem}
\label{thm:2}
We have
$$
P(\fP_n)\ge  P(\fP_n^+)\gg n^{20/37}.
$$
\end{theorem}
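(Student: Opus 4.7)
\emph{Reformulation.} For any prime $p > \sqrt n$, we have $p^2 > n$, so the base-$p$ digits of $n$ are exactly $q:=\fl{n/p}$ and $r:=n-qp$ with $0\leq r<p$ and $q<p$; thus $s_p(n)=q+r$. Setting $s:=p-r$, the inequality $s_p(n)\geq p$ becomes $p\mid n+s$ with $1\leq s\leq q$, i.e.\ $sp\leq n$. Hence a prime $p>\sqrt n$ lies in $\fP_n^{+}$ if and only if there is an integer $s\geq 1$ with $sp\leq n$ and $p\mid n+s$.

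\emph{Reduction.} Fix $Y:=n^{20/37}$ and $S:=\fl{n/(2Y)}\asymp n^{17/37}$. It is enough to exhibit $s\in[1,S]$ and a prime $p\in(Y,2Y]$ with $p\mid n+s$, for then $sp\leq 2SY\leq n$, placing $p$ in $\fP_n^{+}$ and giving $P(\fP_n^{+})>Y=n^{20/37}$. Since $Y^2>n+S$ for $n$ large, each $n+s$ has at most one prime factor in $(Y,2Y]$, so the problem becomes the positivity of
\[
T(n)\;:=\;\sum_{p\in(Y,2Y]}\Bigl(\fl{(n+S)/p}-\fl{n/p}\Bigr).
\]

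\emph{Analytic estimate.} Using $\fl x=x-\tfrac12-\psi(x)$,
\[
T(n)\;=\;S\sum_{p\in(Y,2Y]}\frac{1}{p}\;+\;\sum_{p\in(Y,2Y]}\bigl(\psi(n/p)-\psi((n+S)/p)\bigr).
\]
By Mertens' theorem, the first sum is $(\log 2+o(1))\,S/\log Y\asymp n^{17/37}/\log n$, which is the expected main term. To bound the remainder, I would approximate each $\psi$ by a Vaaler-type truncated Fourier series of height $H^{\ast}\asymp Y^{2}/n$, reducing to estimating the exponential sums
\[
\Bigl|\sum_{p\in(Y,2Y]}\e(hn/p)\Bigr|
\]
uniformly for $1\leq h\leq H^{\ast}$. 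Applying Vaughan's (or Heath-Brown's) identity decomposes each such sum into Type~I and Type~II bilinear pieces; after a dyadic partition the Type~II piece is handled by Cauchy--Schwarz plus estimates for averaged Kloosterman-type sums arising from the reciprocal phase $hn/m$, yielding a bound $\ll Y^{17/20}$ up to logarithmic factors --- just enough to absorb the entire remainder into the main term.

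\emph{Main obstacle.} The crux is the exponential sum bound of the previous step. Since the first derivative $-hn/p^{2}$ of the phase $hn/p$ is $\ll 1$ throughout $p\in(Y,2Y]$ for $h\leq H^{\ast}$, no saving can be extracted by straightforward van der Corput differencing, and one is forced into genuinely bilinear analysis with reciprocal phases. The exponent $20/37$ emerges as the precise balance point of the Type~I/Type~II optimization; any quantitative improvement of the Type~II bound would immediately translate into an improved lower bound for $P(\fP_n^{+})$.
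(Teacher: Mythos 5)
Your reformulation of $s_p(n)\ge p$ for $p>\sqrt n$ is correct, and your reduction to the positivity of $T(n)=\sum_{Y<p\le 2Y}\bigl(\fl{(n+S)/p}-\fl{n/p}\bigr)$ with $Y=n^{20/37}$ and $S\asymp n/Y$ is exactly equivalent to the statement the paper actually uses: that there are $\gg n/(Y\log n)$ primes $p\asymp Y$ with $\{n/p\}\ge 1-O(n/Y^2)$. The paper obtains this by quoting Proposition~2 of Baker and Harman verbatim (Lemma~\ref{lem:x/p}) and then carrying out the same two-digit computation you do in your first step; the exponent $20/37$ is simply the reciprocal of the Baker--Harman exponent $37/20$.

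The gap is in your ``analytic estimate'' step, which is where all of the difficulty lives and which you leave as a sketch. Two concrete problems. First, even if your claimed exponential-sum bound held, it would not suffice: $Y^{17/20}=n^{(20/37)\cdot(17/20)}=n^{17/37}$ is exactly the size of $S$, whereas the main term is only $\asymp S/\log Y$; after summing over $h\le H^{*}$ with weights $1/h$ you accumulate further logarithms, so the ``remainder'' would dominate the main term rather than be absorbed by it. You would need each sum $\sum_{p\in(Y,2Y]}\e(hn/p)$ to be $o\bigl(S/(\log n)^{2}\bigr)$, i.e.\ a genuine saving below $Y^{17/20}$. Second, no such saving follows from a routine Vaughan or Heath--Brown decomposition at $Y=n^{20/37}$: for instance the explicit bound $\sum_{m\sim M}\Lambda(m)\e(x/m)\ll (x^{2}M^{19})^{1/24+o(1)}$ (the paper's Lemma~\ref{le5}), applied with $M=Y$ and $x\le Y^{2}$, gives only about $n^{0.518}$, far above $n^{17/37}\approx n^{0.46}$. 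This is precisely why Baker and Harman do not prove an asymptotic for this count but only a lower bound of the correct order, using sieve methods to detect the primes with bilinear Kloosterman-fraction estimates as arithmetical input; $37/20$ is the limit of that machinery. In short, your proposal reproves the elementary half of the paper's argument (its Lemma~\ref{lem:prim div}) and asserts the hard analytic half, which you yourself flag as the ``main obstacle''; to complete the proof along the paper's lines you should simply invoke the Baker--Harman result rather than attempt to rederive it.
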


This result implies that there exists $n_0$ 
such that Conjecture~\ref{conj:Kell1} is true
with $192$ replaced by $n_0$.

It is useful to recall that  we always have $ P(\fP_n) <n/2+1$  (see~\cite{Kel,KeSo}), 
and it is easy to see that for any prime $p$ we have $ P(\fP_{2p-1}) = p$. 

We establish a stronger form of  Conjecture~\ref{conj:Kell2}.
\begin{theorem}
\label{thm:3}
There exists $c>0$ such that, for any positive integer $n$ sufficiently large, 
$$
\omega(\fP_n^{+}) =  n \rE_1 \( \log \sqrt{n} \)  + O \( \sqrt{n} \, \delta_c \( \sqrt{n} \) \), 
$$
where the exponential integral is defined in~\eqref{eq:intexp} and the function $\delta_c$ is given in~\eqref{eq:delta}.
\end{theorem}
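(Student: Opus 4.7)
Since each prime $p>\sqrt n$ has only two base-$p$ digits, namely $a=\lfloor n/p\rfloor$ and $b=n\bmod p$, the condition $s_p(n)\ge p$ is equivalent to $n/(a+1)<p\le(n+a)/(a+1)$. This interval has length $a/(a+1)<1$, so it contains at most one integer, which is $\lfloor n/(a+1)\rfloor+1$ provided $(a+1)\nmid n$. Setting $m=a+1$ and re-indexing by the value $q=\lfloor n/m\rfloor+1$ of that candidate integer (which contributes only when prime), with $m\in(n/q,n/(q-1)]\cap\mathbb Z$, one derives
\[
\omega(\fP_n^+)=\sum_{\substack{q\text{ prime}\\q>\sqrt n}}\bigl(\lfloor n/(q-1)\rfloor-\lfloor n/q\rfloor\bigr)+O(d(n)),
\]
where the $O(d(n))$ term absorbs the excluded $(q-1)\mid n$ overcounts and boundary effects at $q\approx\sqrt n$.

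Writing $\lfloor x\rfloor=x-\tfrac12-\psi(x)$ splits the summand into a smooth part $n/(q(q-1))$ and an oscillatory part $\psi(n/q)-\psi(n/(q-1))$. The smooth sum is handled by Abel summation combined with the Vinogradov--Korobov PNT $\pi(x)=\li(x)+O(x\delta_c(x))$; after integration by parts, using the substitution $t=e^u$, one identifies
\[
\int_{\sqrt n}^\infty\frac{dt}{t^2\log t}=\rE_1(\log\sqrt n),
\]
while the PNT remainder contributes $O(\delta_c(\sqrt n)/\sqrt n)$. Therefore
\[
n\sum_{\substack{q\text{ prime}\\q>\sqrt n}}\frac{1}{q(q-1)}=n\rE_1(\log\sqrt n)+O\bigl(\sqrt n\,\delta_c(\sqrt n)\bigr),
\]
which already realises the shape of the claim.

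It remains to bound $\sum_{q}\bigl(\psi(n/q)-\psi(n/(q-1))\bigr)$ by $O(\sqrt n\,\delta_c(\sqrt n))$. For this one applies Vaaler's truncated Fourier approximation
\[
\psi(x)=\sum_{0<|h|\le H}\alpha_h\,\e(hx)+O\!\left(\sum_{|h|\le H}\beta_h\,\e(hx)\right),\qquad |\alpha_h|\ll 1/|h|,\ \beta_h\ll 1/H,
\]
reducing the estimate to exponential sums over primes of the form
\[
S(h,K)=\sum_{\substack{q\sim K\\q\text{ prime}}}\e(hn/q),\qquad 1\le|h|\le H,\ \sqrt n\le K\le n/2.
\]
Vaughan's identity decomposes $S(h,K)$ into Type~I and Type~II bilinear pieces; since $hn/q$ is smooth in $q$ with $(hn/q)''\asymp hn/q^3$, van der Corput's method (for Type~I) and Cauchy--Schwarz combined with the second-derivative test (for Type~II) yield $S(h,K)\ll K\,\delta_c(K)$ with uniformity in $|h|\le H$. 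Choosing $H=\exp\bigl((\log\sqrt n)^{3/5}(\log_2\sqrt n)^{-1/5}\bigr)$ balances the Vaaler truncation remainder against the exponential-sum saving, and, after summing over $h$ and over dyadic scales $K$ (using that $\delta_c(K)\log K\ll\delta_{c_0}(K)$), one obtains the required bound.

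The main technical obstacle is the uniformity in $h$ of the exponential sum bound $S(h,K)\ll K\,\delta_c(K)$. Unlike the classical Vinogradov sum $\sum_{p\le x}\e(\alpha p)$, the phase $hn/q$ depends on $q$ through its reciprocal, and the Type~II bilinear forms must be treated carefully to track the dependence on $h$. Once this uniform estimate is available, assembling the main term, the oscillatory bound, and the negligible contribution $O(d(n))$ yields the stated asymptotic.
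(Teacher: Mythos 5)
Your reduction to the floor-difference sum over primes $q>\sqrt n$, the evaluation of the smooth part via partial summation and the Vinogradov--Korobov form of the Prime Number Theorem, and the plan of Vaaler's approximation plus exponential sums over primes for the oscillatory part all coincide with the paper's argument (the paper uses the exact identity $\omega(\fP_n^+)=\sum_{\sqrt n<p\le n}(\lfloor(n-1)/(p-1)\rfloor-\lfloor n/p\rfloor)$, which avoids your $O(d(n))$ correction, but that discrepancy is harmless). The genuine gap is in the oscillatory part: you claim $S(h,K)=\sum_{q\sim K}\e(hn/q)\ll K\delta_c(K)$ uniformly for \emph{all} dyadic scales $\sqrt n\le K\le n/2$. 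This is false at the top of the range. For $K\asymp n$ the phase $n/q$ lies in a bounded interval and its derivative is $O(1/n)$, so there is no oscillation to exploit; partial summation against $\pi(t)$ gives $\sum_{n/2<q\le n}\e(n/q)=(n/\log n)\int_1^2\e(u)u^{-2}\,\mathrm{d}u+o(n/\log n)$, which is of exact order $n/\log n$. More generally, every known treatment of $\sum_{m\sim M}\Lambda(m)\e(x/m)$ (including the Granville--Ramar\'e bound the paper uses, which requires $M\le x^{3/5}/5$) breaks down well before $M\asymp n$, because the Type~II bilinear forms in Vaughan's identity lose their savings once the phase becomes flat. So the difficulty is not, as you suggest, the uniformity in $h$, but the range of $K$.

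The missing ingredient is the paper's splitting of the sum at $\sqrt n\,\delta_c(\sqrt n)^{-1}$. For $p$ beyond that point no cancellation is needed: the summand $\lfloor(n-1)/(p-1)\rfloor-\lfloor n/p\rfloor$ is nonzero only when $\lfloor(n+\alpha\sqrt n)/p\rfloor>\lfloor n/p\rfloor$ with $\alpha=\delta_c(\sqrt n)$, i.e.\ only when some integer in $(n,n+\alpha\sqrt n]$ is divisible by such a large prime, and an elementary divisor-counting argument (Lemma~\ref{le2} and Corollary~\ref{cor3}) bounds the number of such $p$ by $O(\sqrt n\,\delta_c(\sqrt n))$. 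The Vaaler/exponential-sum machinery is then applied only on $(\sqrt n,\sqrt n\,\delta_c(\sqrt n)^{-1}]$, where every dyadic block has $M=n^{1/2+o(1)}$, the bound of Lemma~\ref{le5} applies with room to spare, and one obtains the power saving $n^{49/100+o(1)}$ for the $\psi$-sum. Correspondingly, the smooth term over the truncated range is completed to $n\rE_1(\log\sqrt n)$ at a cost of $O(\sqrt n\,\delta_c(\sqrt n))$ using $\rE_1(x)<e^{-x}/x$. With this splitting inserted, your argument becomes the paper's.
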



Successive integration by parts yield that for any positive integer $N\ge 1$
$$\rE_1(x) = \frac{e^{-x}}{x} \sum_{m=0}^{N-1} \frac{(-1)^m m!}{x^m} + (-1)^N N! \int_x^\infty \frac{e^{-t}}{t^{N+1}} \, \textrm{d}t,$$
from which we immediately deduce the following estimate.

\begin{cor}
\label{co:3.4}
For all positive integers $n ,N$, with $n$ sufficiently large,
$$
   \omega \( \mathfrak{P}_n^+ \) = \sum_{j=1}^N  \frac{  (-1)^{j-1} 2^j (j-1)! \sqrt{n}}{(\log n)^j}
   + O \( \frac{2^{N+1} N! \sqrt{n}}{(\log n)^{N+1}} \),
$$
in particular Conjecture~\ref{conj:Kell2} 
holds true with 
$\kappa=2$.
\end{cor}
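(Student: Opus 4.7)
The proof is essentially a straightforward substitution combined with a trivial tail bound, so my plan is to lay out the computation cleanly rather than look for a clever trick.

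First, I would set $x = \log\sqrt{n} = \tfrac{1}{2}\log n$ so that $e^{-x} = n^{-1/2}$. Starting from Theorem~\ref{thm:3}, namely
$$
\omega(\fP_n^{+}) = n\,\rE_1(x) + O\!\left(\sqrt{n}\,\delta_c(\sqrt{n})\right),
$$
I would plug the displayed integration-by-parts identity for $\rE_1(x)$ directly into this formula. For each term in the finite sum, the factor $n\cdot e^{-x}/x^{m+1}$ collapses to
$$
\frac{n \, e^{-x}}{x^{m+1}} = \frac{\sqrt{n}}{(\tfrac{1}{2}\log n)^{m+1}} = \frac{2^{m+1}\sqrt{n}}{(\log n)^{m+1}},
$$
so reindexing with $j = m+1$ reproduces the main term $\sum_{j=1}^{N} (-1)^{j-1} 2^{j}(j-1)!\sqrt{n}/(\log n)^{j}$ exactly.

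Next I would handle the remainder in the $\rE_1$ expansion. The crude estimate
$$
\int_x^\infty \frac{e^{-t}}{t^{N+1}}\,\mathrm{d}t \le \frac{1}{x^{N+1}}\int_x^\infty e^{-t}\,\mathrm{d}t = \frac{e^{-x}}{x^{N+1}}
$$
gives, after multiplication by $n\cdot N!$, the contribution
$$
n\, N!\int_x^\infty \frac{e^{-t}}{t^{N+1}}\,\mathrm{d}t \le \frac{2^{N+1} N!\,\sqrt{n}}{(\log n)^{N+1}},
$$
which matches the stated error term. It then remains to check that the Theorem~\ref{thm:3} error $O(\sqrt{n}\,\delta_c(\sqrt{n}))$ is absorbed: by~\eqref{eq:delta}, $\delta_c(\sqrt{n})$ decays faster than any negative power of $\log n$, so for every fixed $N$ it is dominated by $2^{N+1} N!/(\log n)^{N+1}$ once $n$ is sufficiently large. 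Combining the two error contributions yields the claimed formula for $\omega(\fP_n^+)$.

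Finally, to confirm Conjecture~\ref{conj:Kell2} with $\kappa = 2$, I would simply specialize to $N=1$, obtaining
$$
\omega(\fP_n^{+}) = \frac{2\sqrt{n}}{\log n} + O\!\left(\frac{\sqrt{n}}{(\log n)^{2}}\right),
$$
which is exactly the statement that $\omega(\fP_n^{+}) = (2+o(1))\sqrt{n}/\log n$. The only mild care required, and the nearest thing to an "obstacle", is the bookkeeping to ensure that the $\delta_c$-error of Theorem~\ref{thm:3} really is negligible compared with the asymptotic-series truncation error for every fixed $N$; this however is immediate from the definition of $\delta_c$.
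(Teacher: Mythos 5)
Your proposal is correct and is exactly the argument the paper intends: it substitutes $x=\log\sqrt n=\tfrac12\log n$ into the stated integration-by-parts expansion of $\rE_1$, bounds the tail integral by $e^{-x}/x^{N+1}$, and notes that the $O(\sqrt n\,\delta_c(\sqrt n))$ error of Theorem~\ref{thm:3} is absorbed since $\delta_c(\sqrt n)$ decays faster than any fixed power of $\log n$. The paper gives no further detail beyond ``we immediately deduce,'' and your computation supplies precisely the omitted steps.
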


%

The following estimate is 
derived in a similar way.

\begin{theorem}
\label{thm:3.5}
There exists a constant $c>0$ such that asymptotically 
$$
\log \fP_n^+ = \sqrt{n} + O \( \sqrt{n} \, \delta_c \( \sqrt{n} \) \),
$$
where the function $\delta_c$ is 
defined in~\eqref{eq:delta}.
\end{theorem}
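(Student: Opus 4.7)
The plan parallels the proof of Theorem~\ref{thm:3}. For $p>\sqrt n$, writing $n=ap+b$ with $0\le a,b<p$ gives $a=\lfloor n/p\rfloor$ and $s_p(n)=a+b$, so the condition $s_p(n)\ge p$ rearranges to $p(a+1)\le n+a$. Combined with $\lfloor n/p\rfloor=a$, this places $p$ in the half-open interval $(y_a,z_a]$ with $y_a=n/(a+1)$ and $z_a=(n+a)/(a+1)$. These intervals are pairwise disjoint, each of length $a/(a+1)<1$, and those sitting above $\sqrt n$ are indexed by $1\le a\le A:=\lfloor\sqrt n\rfloor-1$. Hence
$$
\log\fP_n^+ \;=\; \sum_{a=1}^A\bigl(\theta(z_a)-\theta(y_a)\bigr),
$$
with $\theta$ Chebyshev's theta function. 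Since each interval contains at most one integer, which (writing $m=a+1$) equals $\lceil n/m\rceil$ precisely when $m\nmid n$, replacing the prime indicator by the von Mangoldt function---at a cost of $O(n^{1/4+\varepsilon})$ from higher prime powers and divisor exceptions---yields
$$
\log\fP_n^+ \;=\; S(n) + O(n^{1/4+\varepsilon}), \qquad S(n):=\sum_{m=2}^{A+1}\Lambda\bigl(\lceil n/m\rceil\bigr).
$$

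To evaluate $S(n)$, group the $m$'s by the common value $k=\lceil n/m\rceil$, giving $S(n)=\sum_k\Lambda(k)\,N(k)$ with $N(k)=n/(k(k-1))+O(1)$ for $k\in(\sqrt n,n/2]$. Splitting off the main piece and applying Abel summation against the Vinogradov--Korobov form $\psi(x)=x+O(x\,\delta_c(x))$ of PNT yields
$$
n\sum_{\sqrt n\le k\le n/2}\frac{\Lambda(k)}{k(k-1)} \;=\; \sqrt n - 2 + O\bigl(\sqrt n\,\delta_c(\sqrt n)\bigr),
$$
where the main $\sqrt n$ comes from the boundary contribution $n\cdot\psi(\sqrt n)/(\sqrt n(\sqrt n-1))$ and the constant $-2$ from the integral of $\psi(t)(2t-1)/(t^2(t-1)^2)$ over $(\sqrt n,n/2]$.

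The principal obstacle is the residual error $\sum_k\Lambda(k)\bigl(N(k)-n/(k(k-1))\bigr)$. Since each summand is only $O(1)$, a triangle inequality gives at best $O(\psi(n/2))=O(n)$, too large by a factor of $\sqrt n$, so genuine cancellation must be harvested. Using the decomposition $N(k)-n/(k(k-1))=\{n/k\}-\{n/(k-1)\}+O(\mathbf{1}[k\mid n])$ together with the reindexing identity
$$
\sum_k\Lambda(k)\bigl(\{n/k\}-\{n/(k-1)\}\bigr)\;=\;\sum_k\bigl(\Lambda(k)-\Lambda(k+1)\bigr)\{n/k\},
$$
the task reduces to bounding exponential sums over primes with phase $n/p$; these are controlled by Vinogradov-style estimates that again depend on the Vinogradov--Korobov zero-free region. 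Combined with the inequality $\delta_c(x)\log x\ll\delta_{c'}(x)$ recorded just after~\eqref{eq:delta}, this delivers the claimed bound $O(\sqrt n\,\delta_{c'}(\sqrt n))$ for some $c'\in(0,c)$.
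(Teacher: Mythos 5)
Your combinatorial setup is sound and is in fact equivalent to the identity the paper works with: your condition $p\in\(n/(a+1),(n+a)/(a+1)\right]$ with $a=\fl{n/p}$ is the same as $\fl{(n-1)/(p-1)}-\fl{n/p}=1$, and your main-term evaluation $n\sum_{k>\sqrt n}\Lambda(k)/(k(k-1))=\sqrt n+O\(\sqrt n\,\delta_c(\sqrt n)\)$ matches the paper's Lemma~\ref{le6}. The gap is in the residual term. You propose to control $\sum_k(\Lambda(k)-\Lambda(k+1))\{n/k\}$ over the \emph{entire} range $\sqrt n<k\le n/2$ by exponential sums over primes with phase $n/p$. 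This cannot work once $k$ is much larger than $n^{1/2+o(1)}$: the Granville--Ramar\'e-type bound the paper uses (Lemma~\ref{le5}) requires $M\le x^{3/5}/5$, and, more fundamentally, on a dyadic block $k\sim K$ with $K$ comparable to $n$ the phase $hn/k$ varies over a bounded range, so $\sum_{k\sim K}\Lambda(k)\e(hn/k)$ has no oscillatory cancellation at all --- by partial summation against the PNT it is in general a nonzero multiple of $K$ for bounded $h$. No ``Vinogradov-style'' input helps here (the zero-free region only enters the main term). Exploiting the difference $\Lambda(k)-\Lambda(k+1)$ gains a factor $O(hn/K^2)$ per term, but optimizing Vaaler's truncation then yields only $O(\sqrt n)$ per dyadic block, i.e.\ the size of the main term rather than of the error term. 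The paper closes this range by a completely different, elementary mechanism: it first truncates at $p\le\sqrt n\,\delta_c(\sqrt n)^{-1}$ and disposes of the tail via Lemma~\ref{le2} and Corollary~\ref{cor3}, noting that a large prime contributes only if it divides some integer in the short interval $(n,n+\alpha\sqrt n]$, and each such integer has $O(1)$ prime factors exceeding $\sqrt n/\alpha$. Without this (or an equivalent) idea for large $p$, your argument does not close.

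A secondary unjustified step is the claimed $O(n^{1/4+\varepsilon})$ cost of replacing the prime indicator by $\Lambda$. A proper prime power $p^j=\rf{n/m}$ with $j\ge2$ corresponds to an integer $mp^j\in[n,n+\sqrt n)$ divisible by the square of a prime exceeding $n^{1/4}$; the trivial bound on the number of such integers in this short window is $O(\sqrt n)$, which would already swamp the target error $O(\sqrt n\,\delta_c(\sqrt n))$, and improving it uniformly in $n$ is a nontrivial short-interval statement of Filaseta--Trifonov type, not something to be asserted in passing. (The paper never faces this, since its $\theta\mapsto\psi$ conversion happens only for $m\le \sqrt n\,\delta_c(\sqrt n)^{-1}$, where the prime-power cost is a harmless $O(\sqrt M)$.)
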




Finally, we look at how $\fP_n$ changes as we move from $n$ to $n+1$. Since $\fP_n$ tends to infinity with $n$
by Theorems~\ref{thm:1} and~\ref{thm:2}, it follows that the inequality $\fP_{n+1}>\fP_n$ holds infinitely often.
Surprisingly though, the reverse inequality is much more frequent and 
in fact even in a strict sense, namely 
we have $\fP_n>\fP_{n+1}$  with frequency about $\ln 2 = 0.6931 \ldots.$
However, we also show that the equality $\fP_n =  \fP_{n+1}$  holds for infinitely many $n$
as well.

\begin{theorem}
\label{thm:4} For any $x\ge 3$ we have:
\begin{itemize}
\item[(i)] the divisibility $\fP_{n+1}\mid \fP_n$ holds  for all except maybe at most 
$O(x (\log_2 x)^{-\delta} (\log_3 x)^{-1/2})$ positive integers $n\le x$;
\item[(ii)] the divisibility $\fP_{n+1}\mid \fP_n$ and the  inequality  $\fP_n > \fP_{n+1}$ hold simultaneously 
 for at least  $\(\ln  2 +o(1)\) x$  positive integers $n\le x$ as $x\to\infty$;  
\item[(iii)] the equality $\fP_q =  \fP_{q+1}$  holds for all except maybe at most 
$O(\pi(x)(\log_2 x)^{-c})$ primes $q\le x$, where $c > 0$ is an absolute constant. 
\end{itemize}
\end{theorem}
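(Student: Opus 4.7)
The common tool is the carry identity
\[
s_p(n+1) = s_p(n) + 1 - (p-1)\,v_p(n+1),
\]
where $v_p$ denotes the $p$-adic valuation; this controls how membership of a prime $p$ in $\fP_\cdot$ can change as $n$ increments by $1$.

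For (i), a prime $p$ causes the divisibility $\fP_{n+1}\mid\fP_n$ to fail exactly when $s_p(n)<p\le s_p(n+1)$; the identity forces $v_p(n+1)=0$ (any carry pushes $s_p(n+1)$ strictly below $s_p(n)$) and $s_p(n)=p-1$, so $s_p(n+1)=p$. Since $s_p(n)\equiv n\pmod{p-1}$, such an $n$ automatically has $(p-1)\mid n$. The contribution of small primes is controlled by the elementary bound $\#\{n\le x:s_p(n)=p-1\}\ll(\log x)^{p-1}$. For the bulk range, the dominant contribution is the two-digit case $n=m(p-1)$ with $1\le m\le p$, which forces $n$ to admit a divisor $p-1$ comparable to $\sqrt n$. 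This count is controlled by an Erd\H os--Ford--Tenenbaum type argument---essentially Ford's theorem on integers with a divisor in a short interval, specialised to shifted-prime divisors---yielding the stated saving $(\log_2 x)^{-\delta}(\log_3 x)^{-1/2}$. The main technical obstacle is precisely this shifted-prime Ford-type bound.

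For (ii), by (i) the divisibility $\fP_{n+1}\mid\fP_n$ fails for only $o(x)$ values of $n\le x$; under the divisibility, strict inequality $\fP_n>\fP_{n+1}$ is equivalent to exhibiting a prime that divides $\fP_n$ but not $\fP_{n+1}$. I would take any prime factor $p$ of $n+1$ lying in $(\sqrt{n+1},(n+1)/2]$: writing $n+1=ap$ with $2\le a<p$, one computes $s_p(n+1)=a<p$ (since $ap$ in base $p$ has digits $a,0$) and $s_p(n)=a+p-2\ge p$ (base-$p$ digits $a-1,p-1$ after one borrow), so $p\in\fP_n\setminus\fP_{n+1}$. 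The density of integers $m\le x+1$ admitting such a prime factor---equivalently, $m$ composite and not $\sqrt m$-smooth---equals $\ln 2$ by Dickman's theorem on smooth numbers, since the $\sqrt m$-smooth density is $1-\ln 2$. Subtracting the exceptional set from (i) gives $(\ln 2+o(1))x$ good $n$.

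For (iii), the carry identity shows that $\fP_q\neq\fP_{q+1}$ iff some prime $p$ satisfies one of three mutually exclusive scenarios: (A) $v_p(q+1)=0$ with $s_p(q+1)=p$; (B) $v_p(q+1)=1$ with $2\le s_p(q+1)<p$; (C) $v_p(q+1)\ge 2$ with $s_p(q+1)<p$. The plan is to bound, for each of these three scenarios and each dyadic range of $p$, the number of primes $q\le x$ triggering the event. I would combine Brun--Titchmarsh on the congruence $q\equiv-1\pmod{p^k}$ with digit-sum combinatorics for shifted primes, and sum dyadically to produce the stated saving $(\log_2 x)^{-c}$. The main obstacle is uniform control over $p$ across three qualitatively different regimes: very small $p$ (handled by elementary counts of integers with prescribed base-$p$ digit sum), moderate $p$ (handled via Brun--Titchmarsh for primes in arithmetic progressions), and $p$ close to or exceeding $\sqrt q$ (handled by explicit analysis of the one- or two-digit base-$p$ representation of $q+1$).
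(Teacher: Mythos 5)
Your treatment of parts (i) and (ii) is essentially the paper's. For (i), your carry identity is a compact form of the paper's Case~1/Case~2 analysis; like the paper, you reduce failure of $\fP_{n+1}\mid\fP_n$ to the existence of a prime $p$ with $s_p(n)=p-1$, hence $(p-1)\mid n$, split at a threshold, count small $p$ by digit combinatorics, and invoke an Erd\H{o}s--Wagstaff-type bound for large $p$. The input you flag as the ``main technical obstacle'' is exactly the McNew--Pollack--Pomerance theorem the paper cites, which with $y=\sqrt{\log x}$ gives the stated $(\log_2 x)^{-\delta}(\log_3 x)^{-1/2}$ saving. (Your remark that the dominant contribution comes from $p-1\asymp\sqrt{n}$ is not how that bound is organised, but it is harmless.) For (ii), your construction $n+1=ap$ with $2\le a<p$ and $p>\sqrt{n+1}$, and the two digit computations, are identical to the paper's; the paper counts such $n$ by Mertens' theorem rather than via Dickman's function, but both yield density $\ln 2$, and you correctly intersect with the good set from (i).

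Part (iii) contains a genuine gap. You set $n=q$ and propose to show each of your scenarios (A), (B), (C) is rare by Brun--Titchmarsh plus dyadic summation over $p$. But scenario (B) with $p>\sqrt{q+1}$ is precisely the event you exploit in part (ii): whenever $q+1=ap$ with $2\le a<p$ one has $p\mid\fP_q$ and $p\nmid\fP_{q+1}$. The Brun--Titchmarsh sum over such $p$ is of order $\sum_{\sqrt{x}<p\le x} x/(p\log x)\asymp\pi(x)$, so no saving is possible; and indeed, by classical results on the largest prime factor of shifted primes, $P(q+1)>\sqrt{q+1}$ holds for a positive proportion of primes $q$, so $\fP_q\ne\fP_{q+1}$ on a positive proportion of primes. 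The statement read literally with $n=q$ is therefore false, and your plan cannot be completed. What the paper actually proves is the equality with $n=q-1$, i.e.\ $\fP_{q-1}=\fP_q$: since $n+1=q$ is prime, the only prime at which a carry can occur is $p=q$ itself, and $q$ divides neither $\fP_{q-1}$ nor $\fP_q$, so your scenarios (B) and (C) vanish entirely and only the no-carry condition $s_p(q-1)=p-1$, hence $(p-1)\mid(q-1)$, remains. That condition is controlled by the theorem of Luca, Pizarro-Madariaga and Pomerance bounding the number of primes $q\le x$ with $(p-1)\mid(q-1)$ for some prime $p\ge y$; this is the second external input the argument needs, and your proposal identifies neither the shift to $n=q-1$ nor this result.
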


We remark that Kellner and Sondow~\cite[Theorem~4]{KeSoAP} have 
shown that for odd $n \geq 1$ the quotient $\fP_{n} / 
\fP_{n+1}$ is an odd integer, except that $\fP_{n} / 
\fP_{n+1} = 2$ if $n = 2^k - 1$ for some $k \geq 2$. One can also find 
in~\cite{KeSoAP} several more  results about the possible values of the 
ratios  $\fP_{n} / \fP_{n+1}$ for $n$ of special structure.

\subsection{Underlying techniques}

It is probably interesting to note that 
in our approach we use a combination of various elementary,  Diophantine and analytic  techniques.

In particular,  for the proof of Theorem~\ref{thm:1} we employ lower bounds 
of linear forms in logarithms due to Matveev~\cite{Mat}.

For the proof of Theorem~\ref{thm:2} we use a  result about the 
distribution of fractional parts of reciprocals of primes as well as bounds of exponential
sums with reciprocals due to Baker and Harman~\cite{BaHa1,BaHa2}.

Finally, we use a recent improvement due to 
 McNew,  Pollack and  Pomerance~\cite{NPP} of a result of Erd\H os and Wagstaff~\cite{EW} 
on the count of positive integers 
$n$ divisible by shifted primes (see also~\cite{Ford}), as well as a result 
of  Luca, Pizarro-Madariaga and  Pomerance~\cite{LPP} about shifted primes divisible
by another shifted prime.

\section{Proof of Theorem~\ref{thm:1}}

\subsection{Sums of digits  of integers in different bases}

Let $a,b\ge 2$ be fixed multiplicatively independent integers.
 It is shown by Senge and  Straus~\cite{SeSt}, that if $K$ is any fixed number, then there are only finitely many positive integers $n$ such that the sum of digits 
of $n$ in both bases $a$ and $b$ is at most $K$. This has been made effective by Stewart~\cite{Ste}
who, in particular, gives a lower bound 
\begin{equation}
\label{eq:Stewart}
s_a(n)+s_b(n)>\frac{\log_2 n}{\log_3 n+C(a,b)}-1
\end{equation}
for all $n>25$, where $C(a,b)$ is some constant depending on $a$ and $b$. 

The constant $C(a,b)$ is not made explicit in~\cite{Ste}. Here we  do so, as this is important
for our purposes, and may also be of independent interest.
As in~\cite{Ste}, our approach is based 
on lower bounds for linear forms in logarithms, where we use 
the bound of  Matveev~\cite{Mat}. We only  need it for logarithms of  rational numbers
rather than in its full generality for logarithms of algebraic numbers. 
We note that for us only the asymptotic dependence of $C(a,b)$ on $a$ and $b$ 
is important, but we also use this as an opportunity to derive a completely 
explicit expression for $C(a,b)$. 

Let $\rho=r/s$ be a 
rational number in reduced form (so, $(\gcd(r,s)=1$ and $s\ge 1$). 
Then its height is defined as 
$$
h(r/s)=\max\{\log |r|, \log s\}.
$$ 
Let $\alpha_1,\ldots,\alpha_k$ be rational numbers not zero or $\pm 1$. We put $A_i=h(\alpha_i)$ for $i=1,\ldots,k$. We let  $d_1,\ldots,d_k$ be nonzero integers and denote
$\max\{|d_1|,\ldots,|d_k|\}$ by $D$.
Let 
\begin{equation}
\label{eq:Lambda}
\Lambda=\prod_{i=1}^k \alpha_i^{d_i}-1.
\end{equation}
The result below follows from~\cite[Corollary~2.3]{Mat}, and the details have been 
worked out as~\cite[Theorem~9.4]{BMS}.

\begin{lemma}
\label{lem:Mat}
If $\Lambda\ne 0$, then 
$$
\log |\Lambda|>-1.4\cdot 30^{k+3} k^{4.5} (1+\log D)\prod_{i=1}^kA_i.
$$
\end{lemma}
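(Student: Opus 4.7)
My plan is to deduce Lemma~\ref{lem:Mat} as the specialization of Matveev's general lower bound for linear forms in logarithms \cite[Corollary~2.3]{Mat} to the case where each $\alpha_i\in\Q$. The bulk of the argument is bookkeeping rather than new mathematics---the real obstacle lies in carefully matching the height normalization used by Matveev against the naive height $h(r/s)=\max\{\log|r|,\log s\}$ appearing in the lemma, and then tracking the numerical constants through the degree-one reduction.

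I would begin by recording Matveev's statement for algebraic numbers: given $\alpha_1,\dots,\alpha_k$ in a number field of degree $\nu$ over $\Q$, nonzero integer exponents $d_1,\dots,d_k$ with $D=\max_i|d_i|$, and real numbers $A_i\ge\max\{\nu\,h_W(\alpha_i),\,|\log\alpha_i|,\,0.16\}$ (where $h_W$ denotes the absolute logarithmic Weil height), one has
$$
\log|\Lambda|>-C(k,\nu)(1+\log D)\prod_{i=1}^{k}A_i,
$$
with $C(k,\nu)$ an explicit constant whose $(k,\nu)$-dependence is spelled out in~\cite{Mat}.

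Next I would match the constants in the rational case. For $\alpha=r/s$ in reduced form with $s\ge 1$, the absolute Weil height equals $h_W(\alpha)=\log\max\{|r|,s\}$, which is precisely the lemma's $h(\alpha)$. Since no $\alpha_i$ is $0$ or $\pm 1$ we have $\max\{|r_i|,s_i\}\ge 2$, so $h(\alpha_i)\ge\log 2>0.16$; and a quick case split according to whether $|\alpha_i|\ge 1$ or $|\alpha_i|<1$ yields $|\log\alpha_i|\le h(\alpha_i)$. Consequently the choice $A_i:=h(\alpha_i)$ is admissible in Matveev's theorem.

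Finally, substituting $\nu=1$ into $C(k,\nu)$ and collecting the resulting numerical factors as carried out in \cite[Theorem~9.4]{BMS} yields a constant bounded by $1.4\cdot 30^{k+3}k^{4.5}$, completing the argument. As noted above, no step requires a new idea; the delicate point is simply to remain consistent about which normalization of height is in play, which is why one would prefer to appeal directly to the form of the bound already recorded in~\cite{BMS} rather than re-derive it from scratch.
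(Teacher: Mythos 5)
Your argument is exactly the paper's: the lemma is obtained by specializing Matveev's \cite[Corollary~2.3]{Mat} to degree one, with the height bookkeeping and the constant $1.4\cdot 30^{k+3}k^{4.5}$ already packaged in \cite[Theorem~9.4]{BMS}, which is all the paper itself says. (One minor caveat: your verification that $|\log\alpha_i|\le h(\alpha_i)$ implicitly assumes $\alpha_i>0$ --- for negative rationals the principal logarithm has $|\log\alpha_i|\ge\pi$ --- but in the paper's application all $\alpha_i$ are positive, so this is harmless.)
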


We are now ready to present a more explicit version of inequality~\eqref{eq:Stewart}.

\begin{lemma}
\label{lem:explicit}
Assume that $a$ and $b$ are coprime
 integers $\ge 2$. Let $B\ge \max\{a,b\}$. 
Then the inequality
$$
s_a(n)+s_b(n)>\frac{\log_2 n}{\log_3 n+C(a,b)}
$$
holds with $C(a,b)=\log(2\cdot 10^{12} (\log B)^2)$ for all $n>\exp(10^{15} (\log B)^4)$.
\end{lemma}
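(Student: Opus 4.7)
The plan is to follow Stewart~\cite{Ste} closely, substituting Lemma~\ref{lem:Mat} for the generic linear form in logarithms bound used there, and tracking all numerical constants. Write the base-$a$ and base-$b$ expansions
$$n=\sum_{i=1}^{r_a}c_ia^{\alpha_i}=\sum_{j=1}^{r_b}d_jb^{\beta_j}$$
with exponents $\alpha_1>\cdots>\alpha_{r_a}\ge 0$, $\beta_1>\cdots>\beta_{r_b}\ge 0$ and digits $1\le c_i\le a-1$, $1\le d_j\le b-1$. Set $r:=r_a+r_b=s_a(n)+s_b(n)$ and assume for contradiction that the claimed lower bound on $r$ fails, so $r$ is modest.

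First I would introduce the linear form
$$\Lambda=\frac{c_1a^{\alpha_1}}{d_1b^{\beta_1}}-1,$$
which is nonzero for large $n$: coprimality of $a$ and $b$ forces $a^{\alpha_1}\mid d_1$ if $\Lambda=0$, impossible as soon as $a^{\alpha_1}>b$. The truncation tails give
$$|c_1a^{\alpha_1}-d_1b^{\beta_1}|\le 2rB\max\bigl(a^{\alpha_2},b^{\beta_2}\bigr),$$
and combined with $d_1b^{\beta_1}\ge n/r$ this yields $|\Lambda|\le 2r^{2}B\max(a^{\alpha_2},b^{\beta_2})/n$. Lemma~\ref{lem:Mat}, applied with $k=3$, rational numbers $a$, $b$, $c_1/d_1$ of heights at most $\log B$, and coefficient bound $D\le(\log n)/\log 2$, gives
$$\log|\Lambda|\ge -K_0(1+\log D)(\log B)^{3},\qquad K_0=1.4\cdot 30^{6}\cdot 3^{4.5}\approx 1.43\cdot 10^{11}.$$
Comparing the two bounds yields the \emph{gap estimate} $\max(\alpha_2\log a,\beta_2\log b)\ge\log n-K_0(\log B)^{3}\log_2 n-O(\log rB)$, so one of the second-largest terms is almost as large as $n$.

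Iterating this procedure on the successive partial sums of the two expansions---at each step pairing appropriately chosen truncations so that the two remainders are genuinely sparser than before---produces a telescoping chain of gap estimates whose total must account for the full $\log n$ in at most $r$ steps. Rearranging yields an inequality of the shape
$$\log n\le r\log\bigl(K_1(\log B)^{2}\log_2 n\bigr),$$
equivalently $r>\log_2 n/(\log_3 n+\log(K_1(\log B)^{2}))$. A careful numerical choice of constants, valid under the size hypothesis $n>\exp(10^{15}(\log B)^{4})$ (which absorbs the error terms $O(\log rB)$ and guarantees $\log D=\log_2 n+O(1)$), then yields the stated $C(a,b)=\log(2\cdot 10^{12}(\log B)^{2})$.

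The main obstacle will be executing the iteration cleanly. Peeling a leading term from one representation disrupts the sparse structure of the other, so at each step one must select which block of digits to remove in order to maintain the hypotheses needed for a fresh application of Matveev's bound. A second delicate point is the recovery of the $(\log B)^{2}$ dependence in $C(a,b)$ rather than the $(\log B)^{3}$ coming na\"ively from the product $A_1A_2A_3$ in Lemma~\ref{lem:Mat}; I expect this to require either switching to a refined two-logarithm bound when consecutive leading digits coincide (so the rational parameter $c_1/d_1$ degenerates to $1$), or absorbing one factor of $\log B$ through the telescoping by estimating ratios of consecutive $|\Lambda|$'s rather than each separately.
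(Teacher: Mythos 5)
Your toolkit is the right one (Stewart's method driven by Matveev's bound, which is exactly what the paper uses), and your first step --- the leading-term linear form and the resulting gap estimate --- is sound. But the iteration scheme you describe does not close. You propose a telescoping chain in which each of at most $r$ steps contributes an \emph{additive} amount $\log\bigl(K_1(\log B)^{2}\log_2 n\bigr)$ towards $\log n$; that would purport to give $r\gg \log n/\log_2 n$, a bound of the wrong order which this method cannot deliver, and your ``equivalently'' converting $\log n\le r\log\bigl(K_1(\log B)^{2}\log_2 n\bigr)$ into $r>\log_2 n/(\log_3 n+\log(K_1(\log B)^{2}))$ is a non sequitur: the first inequality rearranges to $r\ge \log n/\log(\cdots)$, not to $\log_2 n/\log(\cdots)$. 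The missing idea is that once you peel off leading blocks, the rational parameter in the linear form is no longer $c_1/d_1$ but a ratio of partial sums of digits, whose height is proportional to the gap already accumulated (in the paper's notation, $A_1=2\vartheta^{s-1}\log B$). Matveev's lower bound therefore degrades \emph{geometrically} with each step, forcing the successive gaps to grow by a factor $\asymp(\log B)^{2}\log_2 n$ each time. The correct bookkeeping, as in the paper (following Stewart), is to set up geometrically increasing windows $\varTheta_s=(\vartheta^{s-1},\vartheta^{s}]$ with $\vartheta=c_0\log_2 n$ and show that each window must contain an exponent difference $m_1-m_i$ or $\ell_1-\ell_j$; only $k\approx\log_2 n/\log_3 n$ such windows fit below $\log n/(4\log B)$, and each contributes at least one nonzero digit, whence $s_a(n)+s_b(n)\ge k+2$. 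This multiplicative structure is precisely why the theorem is doubly logarithmic, and it is the point your sketch glosses over.

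Two smaller remarks. Your worry about recovering $(\log B)^{2}$ rather than $(\log B)^{3}$ is a red herring: the upper bound for $\log|\Lambda|$ at step $s$ is $-(m_1-m_{u+1}-1)\log a$ (or the $b$-analogue), so one factor of $\log a$ (resp.\ $\log b$) cancels against $A_2A_3=(\log a)(\log b)$, and no refined two-logarithm estimate or ratio trick is needed. Also, the nonvanishing of $\Lambda$ must be verified for the multi-term partial sums, not only for the leading terms; the paper does this by observing that $\alpha_a a^{m_u}=\beta_b b^{\ell_v}$ with $\gcd(a,b)=1$ would force two factors on one side each to exceed $\sqrt{n}$, contradicting $n=a^{m_u}\alpha_a+\zeta_a$.
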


\begin{proof}
We follow~\cite[pp.~66--69]{Ste} with the appropriate modifications. Consider the following $a$-ary and $b$-ary 
expansions  of  $n>a+b$:
\begin{align*}
n & =  a_1 a^{m_1}+a_2 a^{m_2}+\cdots+a_r a^{m_r},\quad a_i\in \{1,\ldots,a-1\},\quad 1\le i\le r,\\
n & = b_1 b^{\ell_1}+b_2 b^{\ell_2}+\cdots+b_a b^{\ell_t},\quad b_i\in \{1,\ldots,b-1\},\quad 1\le i\le t,
\end{align*}
where 
$$
m_1>\cdots>m_r\ge 0\quad {\text{\rm and}}\quad \ell_1>\ell_2>\cdots>\ell_t\ge 0.
$$ 
We put
$$
\vartheta=c_0\log_2 n,
$$
with $c_0$ an explicit constant depending on $B$ to be found later. We now consider  the intervals 
$$
\varTheta_1=(0,\vartheta],\quad \varTheta_2=(\vartheta,\vartheta^2],\quad \ldots,\quad  \varTheta_k=(\vartheta^{k-1},\vartheta^k],
$$
where $k$ satisfies the inequalities
\begin{equation}
\label{eq:k}
\vartheta^k\le \frac{\log n}{4\log B}<\vartheta^{k+1}.
\end{equation}
We assume $k\ge 1$ in~\eqref{eq:k}. We now show that for an appropriate $c_0$ and sufficiently large $n$ each interval $\varTheta_s$ contains either a term of the form $m_1-m_i$ for $i=2,\ldots, r$ or a term of the form $\ell_1-\ell_j$ 
for $j=2,\ldots,t$. Let us suppose that it is not so. Then there is $s$ with $1\le s\le k$ such that $\varTheta_s$ does not contain any $m_1-m_i$ and any $\ell_1-\ell_j$. So, let $u,~v$ be given by
\begin{eqnarray}
\label{eq:s}
&& m_1-m_u\le \vartheta^{s-1},\quad m_1-m_{u+1}\ge \vartheta^s;\\
 && \ell_1-\ell_v\le \vartheta^{s-1},\quad \ell_1-\ell_{v+1}\ge \vartheta^s.
 \end{eqnarray}
 Note that, since $n>B^2$,
 $$
 m_1= \fl{ \frac{\log n}{\log a}} \ge \frac{\log n}{\log a}-1>\frac{\log n}{4\log B}>\vartheta^k\ge \vartheta^s
 $$
and a similar inequality holds for $\ell_1$. Write
$$
n   =   a^{m_u} \alpha_a+\zeta_a \mand 
n  =  b^{\ell_v} \beta_b+\zeta_b, 
$$
with 
\begin{align*} 
\alpha_a =  a_1 a^{m_1-m_u}+\cdots+\alpha_a\mand
\beta_b =  b_1 b^{\ell_1-\ell_v}+\cdots+\beta_b.
\end{align*}
Clearly, $\zeta_a\in [0,a^{m_{u+1}+1})$ and $\zeta_b\in [0,b^{\ell_{v+1}+1})$. We then have
\begin{equation}
\label{eq:Matapp}
|\alpha_a a^{m_u}-\beta_b b^{\ell_v}|\le \max\{\zeta_a,\zeta_b\}\le
\max\left\{a^{m_{u+1}+1},b^{\ell_{v+1}+1}\right\}.
\end{equation}
We show that the left hand side of~\eqref{eq:Matapp} is nonzero. In order to do so, recall that
$$
m_1-m_u\le \vartheta^{s-1}<\vartheta^k<\frac{\log n}{4\log B},
$$
so 
\begin{align*}
m_u & =  m_1-(m_1-m_u)= \fl{ \frac{\log n}{\log a}}-\frac{\log n}{4\log B}\\
& \ge   \frac{\log n}{\log a}-1-\frac{\log n}{4\log B}\ge \frac{3\log n}{4\log a}-1>\frac{\log n}{2\log a},
\end{align*}
provided $n>B^4$. In particular, $a^{m_u}>{\sqrt{n}}$. A similar argument shows that $b^{\ell_v}>{\sqrt{n}}$. Since $a$ and $b$ are coprime, it follows that if the left hand side   of~\eqref{eq:Matapp} is zero, then 
$a^{m_u}\alpha_a=b^{\ell_v} \beta_b$, so $a^{m_u}\mid \beta_b$ and $b^{\ell_v}\mid \alpha_a$. In particular, both $\alpha_a$ and $a^{m_u}$ exceed ${\sqrt{n}}$, so 
$$
n=a^{m_u} \alpha_a+\zeta_a\ge a^{m_u} \alpha_a>{\sqrt{n}} \cdot {\sqrt{n}}=n,
$$
a contradiction.  Assuming that the maximum on the right hand side of~\eqref{eq:Matapp} is $b^{\ell_{v+1}+1}$, 
we divide both sides of~\eqref{eq:Matapp} by $B b^{\ell_v}$, getting
\begin{equation}
\label{eq:1}
\left|(A/B) a^{m_u} b^{-\ell_v}-1\right|<\frac{b^{\ell_{v+1}+1}}{b^{\ell_v} \beta_b}\le \frac{1}{b^{\ell_v-\ell_{v+1}-1}}.
\end{equation}
A similar inequality holds when the maximum on the right hand side of~\eqref{eq:Matapp} is $a^{m_{u+1}+1}$, namely
\begin{equation}
\label{eq:2}
\left|(B/A) a^{-m_u} b^{\ell_v}-1\right|<\frac{1}{a^{m_1-m_{u+1}-1}}.
\end{equation}
So, we are all set to apply Lemma~\ref{lem:Mat} to find a lower bound on the left hand side of~\eqref{eq:1} or~\eqref{eq:2}. 

We take 
$$
l=3,\quad \alpha_1=(A/B),\quad \alpha_2=am, \quad \alpha_3=b, 
$$ 
and 
$$
(d_1,d_2,d_3)=\varepsilon(1,m_u,-l_v),
$$
where $\varepsilon\in \{\pm 1\}$. More precisely, we set
\begin{itemize}
\item   $\varepsilon=1$ if in~\eqref{eq:Matapp} we have
$ \max\left\{a^{m_{u+1}+1},b^{\ell_{v+1}+1}\right\} = b^{\ell_{v+1}+1}$\\ (and thus~\eqref{eq:1} holds);

\item   $\varepsilon=-1$ if in~\eqref{eq:Matapp} we have
$ \max\left\{a^{m_{u+1}+1},b^{\ell_{v+1}+1}\right\} = a^{m_{u+1}+1}$ (and thus~\eqref{eq:2} holds).
\end{itemize}
Since by
assumption $a$ and $b$ are multiplicatively independent, 
these events are 
clearly mutually exclusive, and so
the choice of $\varepsilon$ is well-defined.

Obviously, we have 
$$
\max\{ m_u,\ell_v\}=\max\left\{ \fl{\frac{\log n}{\log a}}, \fl{\frac{\log n}{\log b}} \right\}\le \frac{\log n}{\log 2}<2\log n,
$$
so we can take $D=2\log n$. Furthermore, $A_2=\log a$ and $A_3=\log b$. As for $A_1$, we have
\begin{align*}
h(\alpha_1) & \le   \max\{\log \alpha_a, \log \beta_b\}\le \max\left\{\log(a^{m_1-m_u+1}), \log (b^{\ell_1-\ell_v+1})\right\}\\
& \le \max\left\{m_1-m_u+1,\ell_1-\ell_v+1\right\}\log B\le (\vartheta^{s-1}+1)\log B\\
& \le  2\vartheta^{s-1} \log B,
\end{align*}
so we take $A_1=2\vartheta^{s-1}\log B$. Now Lemma~\ref{lem:Mat} combined with inequalities~\eqref{eq:1} or~\eqref{eq:2} tells us that
\begin{align*}
 -1.4\times 30^6 \times 3^{4.5} (1 & +\log(2\log n)) 2\vartheta^{s-1}(\log a) (\log b )(\log B)\\
& <  -\max\{\log(a^{m_1-m_{u+1}-1}), \log (b^{\ell_1-\ell_{v+1}-1})\}.
\end{align*}
Assuming that $n>230$ (so that $2\log_2 n>1+\log(2\log n)$), we derive 
\begin{align*}
 \max\{(m_1&-m_{u+1}-1)\log a, (\ell_1-\ell_v-1)\log b\}\\
& <  6\cdot 10^{11}\vartheta^{s-1} (\log a)(\log b) (\log B)(\log_2 n).
\end{align*}
Since by inequality~\eqref{eq:s}
$$
\max\left\{m_1-m_{u+1}-1,\ell_1-\ell_{v+1}-1\right\}\ge \vartheta^{s}-1> \frac{\vartheta^s}{2}
$$
for $s\ge 1$, we see that
$$
\frac{\vartheta^s}{2}<6\cdot 10^{11} \vartheta^{s-1} (\log B)^2 \log_2 n,
$$
giving
$$
\vartheta<12\cdot 10^{11} (\log B)^2 \log_2 n,
$$
which is false if we choose $c_0=2\cdot 10^{12} (\log B)^2$. It remains to establish the starting value for $n$ such that $k\ge 1$. That is, 
$$
\vartheta<\frac{\log n}{4\log B}.
$$
This is equivalent to
\begin{equation}
\label{eq:c1}
\frac{\log n}{\log_2 n}>4c_0(\log B)^3.
\end{equation}
The right hand side above is $8\cdot 10^{12} (\log B)^3$. The inequality 
$$
\frac{x}{\log x}>A
$$ 
is fulfilled when $x>2A\log A$, provided 
that $A>e$. We now take $A=10^{13} (\log B)^3$ and we see  that~\eqref{eq:c1}
holds, provided that
$$
\log n> 2\cdot 10^{13} (\log B)^3(13\log 10+3\log_2 B).
$$
Since $\log_2 B<\log B$, $B\ge 3$, and $13\log 10+3<33$, it follows that the desired inequality holds for $\log n>10^{15} (\log B)^4$. 
Thus indeed, each interval $\varTheta_s$ for $s=1,\ldots,k$ contains one of $m_1-m_i$ or $\ell_1-\ell_j$ for $i=1,\ldots,r$, or $j=1,\ldots,t$. Hence, $r-1+s-1\ge k$. Therefore,
\begin{align*}
s_a(n)+s_b(n) & \ge  r+s\ge k+2=(k+1)+1\\
&>\frac{\log((\log n)/(4\log B))}{\log \vartheta}+1\\
& =  \frac{\log_2 n+(\log \vartheta-\log(4\log B))}{\log_3 n+\log c_0}, 
\end{align*}
on recalling that $\vartheta=c_0\log \log n$.
Clearly, $\vartheta>4\log B$. Thus the inequality
$$
s_a(n)+s_b(n)>\frac{\log_2 n}{\log_3 n+C}
$$
holds with $C=\log(2\cdot 10^{12} (\log B)^2)$, which concludes the proof.
\end{proof} 

\subsection{The proof of Theorem~\ref{thm:1}} Let $\varepsilon>0$ be arbitrary. 
We consider the primes $p\le B$ with $B=\log_2 n$. Then, as $n\rightarrow \infty$, we have 
$$
C=\log(2\cdot 10^{12} (\log B)^2)=2\log_4 n+O(1).
$$
Further, we need to check that $n>\exp(10^{15} (\log B)^4)$. This is equivalent to 
$\log n>10^{15} (\log_3 n)^4$, which holds for all $n>\exp(10^{18})$. Thus, assuming $n$ is this large, for any two 
distinct primes $p,q\le B$, it follows 
by inequality (\ref{eq:Stewart}) that, 
as $n\rightarrow \infty$, the inequality
$$
s_p(n)+s_q(n)>(1+o(1))\frac{\log_ 2 n}{\log_3 n}
$$
holds uniformly in primes $p,q\le B$. If
in particular we take $P=(1/2-\varepsilon)\log_2 n/\log_3 n$, it follows that for every $n>n_{\varepsilon}$, there is at most one prime $q\le \log_2 n$ such that
$$
s_q(n)<P.
$$
For all other primes $p\le \log_2 n$, we then have $s_p(n)\ge P$. So, if in addition we also have $p<P$, then $s_p(n)\ge P>p$, so $p\mid \fP_n$. Thus, indeed, for all $\varepsilon>0$, there exists $n_{\varepsilon}$ such that if $n>n_{\varepsilon}$,
then all primes $p<(1/2-\varepsilon)\log_2 n/\log_3 n$ divide $\fP_n$ with at most one exception.

\section{The proof of Theorem~\ref{thm:2}}

\subsection{Fractional parts of reciprocals of primes} 
We will use the following result from~\cite[Proposition~2]{BaHa1}:

\begin{lemma}
\label{lem:x/p}
For all $v$ and $w$ that satisfy
$$
v^{37/20}\le w\le v^2,
$$
and are sufficiently large, we have
$$
\#\left \{p~ :~2v<p<3v, \  \left\{\frac{w}{p}\right\} \ge 1- \frac{w}{16 v^2}\right \} \gg \frac{w}{v\log w}.
$$
\end{lemma}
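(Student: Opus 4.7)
The condition $\{w/p\}\ge 1-w/(16v^2)$ picks out primes $p\in(2v,3v)$ such that $w/p$ lies in an arc of length $\delta:=w/(16v^2)$ just below an integer. My plan is to detect this Diophantine condition by Fourier analysis and then reduce the counting problem to bounding exponential sums over primes. Using a Beurling--Selberg/Vaaler approximation to the indicator of the arc $[-\delta,0)\pmod 1$, one gets a trigonometric polynomial of degree $H\asymp 1/\delta$ with coefficients $c_h$ satisfying $|c_0|\asymp\delta$ and $|c_h|\ll\min(\delta,1/|h|)$, so that
$$
N:=\#\left\{p:2v<p<3v,\ \{w/p\}\ge 1-\delta\right\}=c_0\bigl(\pi(3v)-\pi(2v)\bigr)+\sum_{0<|h|\le H}c_h\,S(h)+\mathrm{(small)},
$$
where $S(h):=\sum_{2v<p<3v}\e(hw/p)$. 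The main term is of order $\delta v/\log v\asymp w/(v\log w)$, which is exactly the size claimed by the lemma, so it suffices to show the sum over $h$ contributes a strictly smaller quantity.

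The core task is thus a non-trivial bound for $S(h)$, uniformly for $1\le |h|\le H$. I would apply a combinatorial identity in the spirit of Vaughan or Heath-Brown to decompose $S(h)$ into Type~I sums $\sum_{m\sim M}\alpha_m\sum_{n\sim N}\e(hw/(mn))$ and Type~II sums $\sum_{m\sim M}\sum_{n\sim N}\alpha_m\beta_n\e(hw/(mn))$. Type~I sums are handled by Poisson summation or a van der Corput-style transformation in the variable $n$, after which the frequency becomes $hw/(mN^2)$ and the resulting sum is controlled by bounds on short character-free sums together with careful control of Kloosterman-type terms. Type~II sums are treated by Cauchy--Schwarz followed by Weyl differencing, which converts them to counts of integer solutions to inequalities of the shape $\|hw(1/n_1-1/n_2)/m\|<\varepsilon$; these are bounded by the spacing argument of Baker--Harman via continued-fraction approximations to $hw/v^2$.

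The main obstacle, and the source of the exponent $37/20$, is balancing the cutoff $M^a N^b$ between Type~I and Type~II ranges against the size of $hw/v^2$. In each bilinear estimate one saves a power of $v$ depending on the ratios $M,N$ and on the Diophantine quality of $hw/v^2$; optimizing the Weyl exponents and the resulting bilinear trade-off forces exactly the lower constraint $w\ge v^{37/20}$, below which the Type~II savings fall short of the main term and the argument collapses. The upper constraint $w\le v^2$ guarantees $\delta\le 1$ and $H\ge 1$, so the Fourier approximation is non-degenerate. Once the above bounds are in place, summing $|c_h||S(h)|$ over $1\le|h|\le H$ produces an error of size $o(w/(v\log w))$, and the asserted lower bound on $N$ follows.
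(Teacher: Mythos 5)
The paper does not prove this statement at all: it is quoted verbatim as Proposition~2 of Baker and Harman \cite{BaHa1}, so there is no in-paper argument to compare yours against. Your outline is indeed in the general spirit of how such results are proved (harmonic detection of the arc, prime sums decomposed into bilinear forms, exponent optimization), but as it stands it is a research plan rather than a proof, and the entire quantitative core is missing.

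Concretely: after the Vaaler approximation the problem reduces, as you say, to showing
$\sum_{0<|h|\le H}|c_h||S(h)| = o(w/(v\log w))$ with $H\asymp v^2/w$, which (since $\sum_{h\le H}1/h\asymp\log H$) requires a genuine power saving $S(h)\ll v^{1-\theta}$ with $\theta$ strictly larger than the deficit $2-\tfrac{\log w}{\log v}\le \tfrac{3}{20}$, uniformly in $h\le H$. You assert that Type~I sums are ``handled by Poisson summation,'' that Type~II sums are ``treated by Cauchy--Schwarz followed by Weyl differencing,'' and that ``optimizing the Weyl exponents \dots forces exactly the lower constraint $w\ge v^{37/20}$,'' but none of these steps is carried out, no explicit bound for either type of sum is stated, and no optimization is performed. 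The exponent $37/20$ is precisely the hard content of Baker--Harman's proposition; it is obtained by their specific machinery (including double large sieve / spacing arguments tailored to the phase $hw/(mn)$), and it is not at all clear that a generic Vaughan decomposition with van der Corput and Weyl differencing reproduces it rather than a weaker exponent. Until the Type~I and Type~II estimates are stated with explicit exponents in $h$, $w$, $M$, $N$ and the balancing is actually done, the claim that the error terms are dominated by the main term in the range $v^{37/20}\le w\le v^2$ is unsubstantiated. The honest route here is either to cite \cite[Proposition~2]{BaHa1} directly, as the paper does, or to supply those bilinear estimates in full.
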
  

\subsection{Concluding the proof} 
We now derive one of our main technical results. 

\begin{lemma}
\label{lem:prim div}
For all $n$ and $v$ that satisfy 
$$
v^{37/20}\le n\le v^2,
$$
and are sufficiently large,
we have
$$
\#\left \{p~:~2v<p<3v, \  p\mid  \fP_n^+\right \} \gg \frac{n}{v\log n}.
$$
\end{lemma}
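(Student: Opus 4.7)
The plan is to translate the divisibility condition $p \mid \fP_n^+$ into a condition on the fractional part $\{n/p\}$ and then invoke Lemma~\ref{lem:x/p}. First I would observe that for any prime $p$ with $2v < p < 3v$, the hypothesis $n \le v^2$ gives $p > 2v \ge 2\sqrt{n} > \sqrt{n}$, so such a prime contributes to $\fP_n^+$ (rather than to $\fP_n^-$) whenever $p \mid \fP_n$. Moreover $p^2 > 4v^2 > n$, so $n$ has exactly two base-$p$ digits. Writing $n = qp + r$ with $q = \lfloor n/p \rfloor$ and $r = p\{n/p\} \in \{0,1,\ldots,p-1\}$, we obtain $s_p(n) = q + r$, and the condition $s_p(n) \ge p$ becomes
\[
\left\{\frac{n}{p}\right\} \ge 1 - \frac{\lfloor n/p\rfloor}{p}.
\]

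The second step is to replace this $p$-dependent threshold by the uniform one
\[
\left\{\frac{n}{p}\right\} \ge 1 - \frac{n}{16v^2},
\]
which is precisely the condition appearing in Lemma~\ref{lem:x/p} with $w = n$. The implication from the uniform bound to the needed one reduces to checking $\lfloor n/p\rfloor/p \ge n/(16v^2)$ for $p \in (2v,3v)$. Using $p < 3v$ and $p > 2v$, one has
\[
\frac{\lfloor n/p\rfloor}{p} \ge \frac{n/p - 1}{p} \ge \frac{n}{9v^2} - \frac{1}{2v},
\]
so the task reduces to the elementary inequality $7n/(144v^2) \ge 1/(2v)$, i.e., $n \ge 72v/7$, which is amply supplied by the hypothesis $n \ge v^{37/20}$ for $v$ sufficiently large.

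With these two reductions in hand, I would apply Lemma~\ref{lem:x/p} with $w = n$: the hypothesis $v^{37/20} \le n \le v^2$ matches exactly, and the lemma produces $\gg n/(v\log n)$ primes $p \in (2v,3v)$ satisfying the uniform fractional-part inequality. Each such prime then divides $\fP_n^+$, giving the claimed lower bound.

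The only point where genuine care is required is the constant comparison in the second step: one needs the slack between the bound $n/p^2 > n/(9v^2)$ coming from $p < 3v$ and the weaker requirement $n/(16v^2)$ built into Lemma~\ref{lem:x/p} to absorb the error term $1/(2v)$ arising from replacing $n/p$ by $\lfloor n/p\rfloor$. The gap between $1/9$ and $1/16$ is comfortable, so no real obstacle arises; everything else is bookkeeping.
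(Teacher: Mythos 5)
Your proposal is correct and follows essentially the same route as the paper: apply Lemma~\ref{lem:x/p} with $w=n$, note $p^2>4v^2>n$ so $s_p(n)=\lfloor n/p\rfloor+p\{n/p\}$, and verify that the fractional-part condition forces $s_p(n)\ge p$ (the paper bounds $s_p(n)\ge p+\frac{n}{p}(1-\frac{p^2}{16v^2})-1>p$ directly, while you equivalently check that the uniform threshold implies $\{n/p\}\ge 1-\lfloor n/p\rfloor/p$; the arithmetic in both cases rests on the same $1/9$ versus $1/16$ slack).
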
 

\begin{proof}  Let $p$ be a prime 
counted in Lemma~\ref{lem:x/p} taken with   $w=n$, that is,
$$
1>\left\{\frac{n}{p}\right\}\ge 1-\frac{n}{16 v^2}. 
$$

Clearly, $p^2>4v^2>n$. Thus, writing $n$ in base $p$, we have $n=ap+b$, where 
$$
a=\fl{n/p} \ge n/p-1, 
$$
and
$$
b=p\left\{\frac{n}{p}\right\}\ge p-\frac{np}{16 v^2}.
$$
Thus, 
\begin{align*}
s_p(n) &=a+b\ge p + \frac{n}{p} -\frac{np}{16 v^2} -1
 = p + \frac{n}{p}\(1 -\frac{p^2}{16 v^2}\) -1 \\
 & > p + \frac{n}{p} \(1 -\frac{9}{16}\) -1=  
 p +\frac{7n}{16p} -1 >p ,
\end{align*}
assuming $p$ (and hence, $n$) is sufficiently large. 
\end{proof}
Now we are home and dry.
\begin{proof}[The proof of Theorem~\ref{thm:2}]
Apply  Lemma~\ref{lem:prim div} with 
$v = n^{20/37}$. 
\end{proof}

 \section{Proof of Theorems~\ref{thm:3} and~\ref{thm:3.5}}
 
 \subsection{Preliminary comments}

The proofs of both Theorems~\ref{thm:3} and~\ref{thm:3.5} may be unified in only one proof by 
making use of the number $\kappa \in \{0,1\}$. 
Indeed, our method also works if the number 
$\kappa$ is any real non-negative number. 
However, this
generalization does not seem to bring added
value and is left to the interested reader.

\subsection{Tools}
 We start with  the following  simple result.

\begin{lemma}
\label{le2}
Let $\kappa \in \{0,1\}$. For any real numbers $\varepsilon \in (0,1)$, $x\ge 2$ and $1 < x^{\varepsilon} \le y \le z < x$
$$\sum_{z < p \le x}  \(  \fl{ \frac{x+y}{p}} -  \fl{ \frac{x}{p}} \)  (\log p)^\kappa < 2\varepsilon^{-1}(y+1) (\log x)^{\kappa}.$$
\end{lemma}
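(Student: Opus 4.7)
\emph{Plan.} The argument will be elementary. I would first observe that $\fl{(x+y)/p} - \fl{x/p}$ counts precisely the multiples of $p$ lying in the interval $(x, x+y]$. Switching the order of summation then rewrites the sum in question as
$$
\sum_{z < p \le x}\(\fl{\frac{x+y}{p}} - \fl{\frac{x}{p}}\)(\log p)^\kappa = \sum_{\substack{n \in \Z\\ x < n \le x+y}}\ \sum_{\substack{p \mid n \\ z < p \le x}} (\log p)^\kappa.
$$

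Next I would bound the inner sum uniformly in $n$. The hypotheses $y \le z < x$ force $n \le x+y < 2x$, and every prime $p$ appearing satisfies $p > z \ge x^\varepsilon$. Since the number of distinct prime divisors of $n$ exceeding $z$ is at most $\log n/\log z$, I would deduce
$$
\#\{p \mid n : z < p\} \le \frac{\log(2x)}{\varepsilon \log x} = \frac{1}{\varepsilon}\(1 + \frac{\log 2}{\log x}\) \le \frac{2}{\varepsilon}
$$
for $x \ge 2$. Combining this with the trivial estimate $(\log p)^\kappa \le (\log x)^\kappa$, valid because $p \le x$ and $\kappa \in \{0,1\}$, the inner sum is at most $2\varepsilon^{-1}(\log x)^\kappa$.

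Finally, the number of integers in $(x, x+y]$ equals $\fl{x+y} - \fl{x} < y + 1$, and multiplying the two bounds yields the desired strict inequality $2\varepsilon^{-1}(y+1)(\log x)^\kappa$. I do not anticipate any genuine obstacle; the essential inputs are only the standard estimate $\omega(n) \le \log n/\log z$ for prime divisors above $z$, together with the constraints $y < x$ (to keep $n < 2x$) and $x \ge 2$ (to absorb $\log 2/\log x \le 1$) to get the clean constant $2$.
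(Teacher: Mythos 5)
Your proof is correct and follows essentially the same route as the paper's: both arguments rewrite the sum as a count over integers $k\in(x,x+y]$ of their prime divisors exceeding $z$, bound that count by $\log(2x)/\log z\le 2/\varepsilon$ using $z\ge x^{\varepsilon}$ and $x\ge 2$, and use $(\log p)^\kappa\le(\log x)^\kappa$. The only cosmetic difference is that the paper reduces to $\kappa=0$ at the outset, whereas you carry the factor $(\log p)^\kappa$ through the inner sum.
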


\begin{proof} 
As $\log p\le \log x$ for $p\le x$, it
suffices to show that the inequality 
holds if $\kappa = 0$. We write
$$\sum_{z < p \le x} \(  \fl{ \frac{x+y}{p}} -  \fl{ \frac{x}{p}} \)
 = \sum_{z < p \le x} \sum_{x/p < m \le (x+y)/p}1 .
$$
Collecting together products $k= mp \in (x, x+y]$ and changing the order of summation 
we obtain 
$$\sum_{z < p \le x} \(  \fl{ \frac{x+y}{p}} -  \fl{ \frac{x}{p}} \) =
\sum_{x < k \le x+y}  \sum_{\substack{p \mid k \\ z < p \le x}} 1
\le \sum_{x < k \le x+y} \frac{\log k}{\log z}.
$$
Since for  $k  \in (x, x+y]$ we have 
$$
\frac{\log k}{\log z} \le \frac{\log (x+y)}{\log z} \le  \frac{\log (2x)}{\log y} \le  \frac{2}{\varepsilon},
$$
the result now follows.  
\end{proof}

\begin{cor}
\label{cor3}
Let $\kappa \in \{0,1\}$ and $\alpha \in \( n^{-7/16},1 \)$. Then
$$\sum_{\sqrt{n}/\alpha < p \le n}  \( \fl{\frac{n}{p} + \frac{n-p}{p(p-1)}} -  \fl{\frac{n}{p}} \)  (\log p)^\kappa \ll \alpha \sqrt{n} (\log n)^{\kappa}.$$
\end{cor}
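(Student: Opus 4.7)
The approach is direct: reshape the summand into the form handled by Lemma~\ref{le2} and then invoke that lemma.

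First I would rewrite the summand algebraically. Putting the two fractions over a common denominator gives
$$
\frac{n}{p} + \frac{n-p}{p(p-1)} = \frac{n + y_p}{p}, \qquad y_p := \frac{n-p}{p-1},
$$
so the summand equals $\fl{(n+y_p)/p} - \fl{n/p}$. This is exactly the object that appears inside Lemma~\ref{le2}, except that the offset $y_p$ depends on $p$.

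Next I would remove this $p$-dependence by a uniform upper bound. For every prime $p > \sqrt{n}/\alpha$ one has $p-1 \ge \sqrt{n}/(2\alpha)$ as soon as $n$ is moderately large (indeed $\sqrt{n}/\alpha - 1 \ge \sqrt{n}/(2\alpha)$ amounts to $\sqrt{n} \ge 2\alpha$, which is trivial since $\alpha<1$), so
$$
y_p = \frac{n-p}{p-1} < \frac{n}{p-1} \le 2\alpha\sqrt{n}.
$$
Since $\fl{(n+t)/p}$ is monotone in $t$, replacing $y_p$ by the constant $y := 2\alpha\sqrt{n}$ majorises the sum.

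The third step is a direct application of Lemma~\ref{le2} with $x = n$, $y = 2\alpha\sqrt{n}$, $z = 2\alpha\sqrt{n}$, and $\varepsilon = 1/16$. The crucial hypothesis $x^\varepsilon \le y$ becomes $n^{1/16} \le 2\alpha\sqrt{n}$, which follows at once from the assumption $\alpha > n^{-7/16}$; this is precisely why that lower bound on $\alpha$ is imposed in the corollary. The lemma then yields a bound $\ll (y+1)(\log n)^\kappa \ll \alpha\sqrt{n}(\log n)^\kappa$ for the contribution of primes $p > 2\alpha\sqrt{n}$. When $\alpha \le 1/\sqrt{2}$ we have $2\alpha\sqrt{n} \le \sqrt{n}/\alpha$, so this already dominates the full sum in the corollary; when $\alpha > 1/\sqrt{2}$ the cut-off $2\alpha\sqrt{n}$ lies above $\sqrt{n}/\alpha$ and I would account for the residual primes $p \in (\sqrt{n}/\alpha, 2\alpha\sqrt{n}]$ separately. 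For those primes $y_p < p$ (since $p > \sqrt{n}$ implies $p^2 > n$), so each residual summand is at most $1$, and Chebyshev gives at most $\pi(2\alpha\sqrt{n}) \ll \alpha\sqrt{n}/\log n$ such primes (using $\alpha\sqrt{n} > n^{1/16}$), contributing $\ll \alpha\sqrt{n}(\log n)^{\kappa-1}$, which is absorbed into the target.

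The argument is essentially routine bookkeeping once the algebraic rewriting is in hand; the only mildly delicate point is matching the parameters so that Lemma~\ref{le2}'s hypothesis $x^\varepsilon \le y$ holds uniformly in $\alpha$, which is precisely what forces the constraint $\alpha > n^{-7/16}$.
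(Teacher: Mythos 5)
Your proof is correct and follows essentially the same route as the paper: bound the $p$-dependent offset $(n-p)/(p-1)$ uniformly for $p>\sqrt{n}/\alpha$ and then apply Lemma~\ref{le2} with $x=n$ and $\varepsilon=1/16$. The only difference is cosmetic: the paper takes $y=\alpha\sqrt{n}$ (using monotonicity of $(n-p)/(p-1)$ to avoid your factor of $2$) and $z=\sqrt{n}/\alpha$, so the lemma's summation range matches the corollary's exactly and your case split for $\alpha>1/\sqrt{2}$ is not needed.
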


\begin{proof}
If $p > \sqrt{n}/\alpha$, then
$$\alpha \sqrt{n} - \frac{n-p}{p-1}> \alpha \sqrt{n} - \frac{n-\sqrt{n}/\alpha}{\sqrt{n}/\alpha-1} = \frac{\sqrt{n}(1+\alpha)(1-\alpha)}{\sqrt{n}-\alpha} > 0,$$
where we used that $(n-p)/(p-1)= (n-1)/(p-1) - 1$ is a decreasing function for $p\ge2$.
Consequently, 
\begin{align*}
\sum_{\sqrt{n}/\alpha < p \le n} & \( \fl{\frac{n}{p} + \frac{n-p}{p(p-1)}} -  \fl{\frac{n}{p}} \)  (\log p)^\kappa\\
& \le \sum_{\sqrt{n}/\alpha < p \le n}  \( \fl{\frac{n + \alpha \sqrt{n}}{p}}-  \fl{\frac{n}{p}} \)  (\log p)^\kappa,
\end{align*}
and the proof is achieved on invoking Lemma~\ref{le2} with $x=n$, $y= \alpha \sqrt{n}$, $z = \sqrt{n}/\alpha$ and $\varepsilon =1/16$.
\end{proof}

\begin{lemma}
\label{le35}
Let $\kappa \in \{0,1\}$ and let  $M$ be a positive integer. 
For any real valued function $g: \Z\to \R$ and
 any positive integer $H$ we have 
\begin{align*}
   &  \left | \sum_{ M < p \le 2M} (\log p)^\kappa \psi \( \frac{n}{p} + g(p) \)  \right | \\
    & \qquad \ll \frac{M}{H(\log M)^{1-\kappa}}  + \sum_{h \le H} \frac{1}{h} \left | \sum_{M < p \le 2M} (\log p)^\kappa \e \( \frac{nh}{p} \) \e \( hg(p) \) \right |
\end{align*}
\end{lemma}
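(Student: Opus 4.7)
The plan is to implement the standard Erd\H{o}s--Tur\'an type reduction of a sum of values of the sawtooth function $\psi$ to exponential sums, by replacing $\psi$ pointwise by a trigonometric polynomial of degree at most $H$ together with a non-negative majorant for the error.

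First I would invoke the Vaaler--Vinogradov bracketing of $\psi$: for every integer $H \ge 1$ there exist coefficients $a_h$ with $|a_h| \ll 1/|h|$ for $0 < |h| \le H$ and a non-negative trigonometric polynomial
\[
R_H(x) = \sum_{|h| \le H} b_h \e(hx), \qquad |b_h| \ll 1/H \quad (|h| \le H),
\]
such that
\[
\Bigl|\psi(x) - \sum_{0 < |h| \le H} a_h \e(hx)\Bigr| \le R_H(x) \qquad \text{for every } x \in \R.
\]
Since $\psi$ is real-valued, the coefficients $a_h$ can be chosen so that $a_{-h} = \overline{a_h}$. This is entirely standard and may be quoted, e.g., from Vaaler's original paper or from Montgomery's \emph{Ten Lectures on the Interface Between Analytic Number Theory and Harmonic Analysis}.

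Next, setting $x = n/p + g(p)$, multiplying by $(\log p)^\kappa$, summing over $M < p \le 2M$ and applying the triangle inequality, the quantity to be estimated splits as $\Sigma_1 + \Sigma_2$ with
\[
\Sigma_1 = \sum_{0 < |h| \le H} |a_h| \Bigl|\sum_{M < p \le 2M}(\log p)^\kappa \e\!\Bigl(\tfrac{nh}{p}\Bigr)\e(hg(p))\Bigr|
\]
and
\[
\Sigma_2 = \sum_{M < p \le 2M} (\log p)^\kappa\, R_H\!\Bigl(\tfrac{n}{p}+g(p)\Bigr).
\]
Using that the exponential sums indexed by $h$ and $-h$ are complex conjugates (hence equal in absolute value) together with $|a_h| \ll 1/|h|$, pairing $h$ with $-h$ shows that $\Sigma_1$ is dominated by the second term on the right of the claimed bound.

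It then remains to estimate $\Sigma_2$, which I would treat by expanding $R_H$ in its Fourier series. The $h=0$ contribution is $b_0 \sum_{M < p \le 2M}(\log p)^\kappa$, and by Chebyshev's estimate (for $\kappa = 1$) or the prime number theorem (for $\kappa = 0$) this is $\ll M/(H(\log M)^{1-\kappa})$, giving exactly the first term of the stated bound. Each non-zero frequency $h$ of $R_H$ produces an exponential sum identical in shape to the one appearing in $\Sigma_1$, weighted by $|b_h| \ll 1/H \le 1/h$ since $1 \le h \le H$; those terms are therefore absorbed, up to an absolute constant, into the $\sum_{h \le H}(1/h)|\cdots|$ part already present. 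I do not foresee a genuine obstacle beyond correctly invoking the Vaaler--Vinogradov approximation with its quantitative coefficient bounds; the rest of the argument is essentially bookkeeping.
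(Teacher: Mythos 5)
Your proposal is correct and follows essentially the same route as the paper: the authors also use Vaaler's approximation (in the form of \cite[Theorem~6.1]{borl}, with the explicit non-negative majorant $\frac{1}{2H+2}\sum_{|h|\le H}(1-|h|/(H+1))\e(hx)$ playing the role of your $R_H$), split into the analogous $\Sigma_1$ and $\Sigma_2$, extract the zero frequency of the majorant via Chebyshev/PNT to get the $M/(H(\log M)^{1-\kappa})$ term, and absorb the non-zero frequencies into the $\sum_{h\le H}h^{-1}|\cdots|$ term using $1/(H+1)\le 1/h$. No substantive difference.
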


\begin{proof}
For any $0 < |t| < 1$ we put 
$\Phi(t) = \pi t (1-|t|) \cot (\pi t) + |t|$.
Note that $0 < \Phi(t) < 1$ for $0 < |t| < 1$.\\
\indent We follow the proof of~\cite[Corollary~6.2]{borl}. It 
follows from the result of Vaaler~\cite{Vaal}, 
which we use in the form given by~\cite[Theorem~6.1]{borl}, 
that for any real number $x\ge 1$ and 
any positive integer $H$,
\begin{equation}
   \psi(x) = - \sum_{0 < |h|  \le H} \Phi \left( \frac{h}{H+1} \right) \frac{\e(hx)}{2 \pi i h} + \mathcal{R}_H(x), \label{e1}
\end{equation}
where the error term $\mathcal{R}_H(x)$ satisfies
\begin{equation}
   \left | \mathcal{R}_H(x) \right |  \le \frac{1}{2H+2} \sum_{|h|  \le H} \left( 1 - \frac{|h|}{H+1} \right) \e(hx). \label{e2}
\end{equation}
The right-hand side of~\eqref{e2} 
does not look like, but is in fact a 
non-zero real number since it can be shown, 
cf. \cite[Exercise 3, p. 350]{borl}, that
$$\sum_{|h|  \le H} \left( 1 - \frac{|h|}{H+1} \right) \e(hx) = \frac{1}{H+1} \left | \sum_{h=0}^H \e(hx) \right |^2.$$
Using~\eqref{e1} with $x =n/p + g(p)$, multiplying by $(\log p)^\kappa$ and summing over all 
the  primes $p$ in $\left( M,2M \right] $ yields the estimate
$$
\sum_{ M < p  \le 2M}  (\log p)^\kappa \psi \left ( \frac{n}{p} + g(p) \right )  = \Sigma_1 + \Sigma_2,
$$
where
\begin{align*}
\Sigma_1 &= - \sum_{0 < |h|  \le H} \Phi \left( \frac{h}{H+1} \right) \frac{1}{2 \pi i h } \sum_{ M < p  \le 2M} (\log p)^\kappa \e \( \frac{nh}{p} + hg(p)\), \\
\Sigma_2 &  = \sum_{ M < p  \le 2M} (\log p)^\kappa \mathcal{R}_H \( \frac{n}{p} + g(p) \).
\end{align*}
Now
\begin{align*}
   \left | \Sigma_1 \right | & \le  \sum_{0 < |h|  \le H} \frac{1}{2 \pi |h|} \left | \sum_{ M < p  \le 2M} (\log p)^\kappa e \left( \frac{nh}{p} + hg(p) \right) \right | \\
   &= \frac{1}{\pi} \sum_{h  \le H} \frac{1}{h} \left | \sum_{ M < p  \le 2M} (\log p)^\kappa \e \( \frac{nh}{p} \) 
   e \( hg(p) \) \right |,
\end{align*}
and by~\eqref{e2} we have
\begin{align*}
   \left | \Sigma_2 \right | &  \le   \sum_{ M < p  \le 2M} (\log p)^\kappa \left | \mathcal{R}_H \left( \frac{n}{p} + g(p) \right) \right | \\
   &  \le   \frac{1}{2H+2} \sum_{|h|  \le H} \left( 1 - \frac{|h|}{H+1} \right) \sum_{ M < p  \le 2M} (\log p)^\kappa e \left( \frac{nh}{p} + hg(p) \right) \\
   &=  \frac{1}{2H+2} \sum_{M < p  \le 2M} (\log p)^\kappa + \frac{1}{H+1} \sum_{h  \le H} \left( 1 - \frac{h}{H+1} \right) \\
   &  \qquad \qquad \times \re \left( \sum_{ M < p  \le 2M} (\log p)^\kappa \e \( \frac{nh}{p} + hg(p) \) \right) \\
   &  \le  \frac{1}{2H+2} \sum_{M < p  \le 2M} (\log p)^\kappa \\
    &  \qquad \qquad + \sum_{h  \le H} \frac{1}{h} \left | \sum_{ M < p \le 2M} (\log p)^\kappa 
    \e \( \frac{nh}{p} \) \e \( hg(p) \) \right |, 
\end{align*}
where in the third line the cases $h=0$ and $|h| > 0$ 
are separated, concluding the proof.
\end{proof}

Recall that the von Mangoldt function is defined by
$$\Lambda(m)=
\begin{cases}
\log p&\qquad\text{if $m$ is a power of a prime $p$;} \\
0&\qquad\text{otherwise.}
\end{cases}
$$
 
\begin{lemma}
\label{le4}
Let $\kappa \in \{0,1\}$ and let $n\ge 1$ and  $M\ge 2$ be positive integers. 
For a  prime $p$, define
$$
g(p) = 0 \qquad \text{or} \qquad g(p) = \frac{n-p}{p(p-1)}.
$$
Then for any positive integer $H$ we have
\begin{align*}
   &  \left | \sum_{ M < p \le 2M} (\log p)^\kappa \psi \( \frac{n}{p} + g(p) \)  \right | \\
   & \quad  \ll  \frac{1}{(\log M)^{1-\kappa}}\sum_{h \le H} \frac{1}{h} \( 1 + \frac{nh}{M^2} \) \max_{M < N \le 2M} \left | \sum_{ M < m \le N} \Lambda(m) \e \( \frac{hn}{m} \) \right | \\
 & \qquad \quad  + \frac{M}{H(\log M)^{1-\kappa}} + \frac{nH}{M^{3/2}(\log M)^{1-\kappa}} + \frac{\sqrt{M} \log H}{(\log M)^{1-\kappa}}.
\end{align*}
\end{lemma}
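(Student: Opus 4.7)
\medskip

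\noindent\textbf{Proof plan for Lemma~\ref{le4}.} The starting point is Lemma~\ref{le35}, which already reduces the problem to bounding, for $1\le h\le H$, the weighted exponential sum
$$E(h) := \sum_{M < p \le 2M} (\log p)^\kappa \e\!\left(\frac{nh}{p}\right)\e(hg(p)).$$
The plan is to carry out two independent steps: \emph{(i)} strip off the factor $\e(hg(p))$ by partial summation, paying a factor $(1+nh/M^2)$; \emph{(ii)} replace the weight $(\log p)^\kappa$ by the von Mangoldt weight $\Lambda(m)$ on integers, paying a factor $(\log M)^{\kappa-1}$ and an additive error $O(\sqrt{M})$ from prime powers.

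\medskip

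\noindent\textbf{Step 1 (removing $\e(hg(p))$).} Set
$$S(N) := \sum_{M < p \le N} (\log p)^\kappa \e\!\left(\frac{nh}{p}\right),\qquad M < N \le 2M,$$
so that $E(h) = \int_M^{2M}\e(hg(t))\,dS(t)$. For the non-trivial choice $g(t)=(n-t)/(t(t-1))$, a direct computation gives $g'(t)=O(n/t^3)$, hence $\int_M^{2M}|g'(t)|\,dt \ll n/M^2$. Integrating by parts and using that $g$ is monotone on $(M,2M]$ yields
$$|E(h)| \;\le\; \Bigl(1 + 2\pi h \int_M^{2M}|g'(t)|\,dt\Bigr)\max_{M<N\le 2M}|S(N)| \;\ll\; \Bigl(1+\frac{nh}{M^2}\Bigr)\max_{N}|S(N)|.$$
When $g\equiv 0$ this is trivial.

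\medskip

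\noindent\textbf{Step 2 (from primes to $\Lambda$).} Write $T(N):=\sum_{M<m\le N}\Lambda(m)\e(nh/m)$. For $\kappa=1$, $S(N)$ equals $T(N)$ minus the prime-power contribution, which is bounded by $\sum_{k\ge 2}\sum_{M^{1/k}<p\le (2M)^{1/k}}\log p \ll \sqrt M$. For $\kappa=0$ the identity $1=\Lambda(m)/\log m$ on primes together with Abel summation gives
$$S(N) \;=\; \frac{T(N)}{\log N} + \int_M^{N}\frac{T(t)}{t\log^2 t}\,dt \;+\; O\!\left(\frac{\sqrt M}{\log M}\right),$$
where the error again collects prime powers. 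In both cases one gets
$$\max_{N}|S(N)| \;\ll\; (\log M)^{\kappa-1}\Bigl(\max_{M<N\le 2M}|T(N)|+\sqrt M\Bigr).$$

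\medskip

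\noindent\textbf{Step 3 (assembly).} Insert the bound of Steps 1–2 into Lemma~\ref{le35}:
\begin{align*}
\Bigl|\sum_{M<p\le 2M}(\log p)^\kappa \psi\bigl(\tfrac{n}{p}+g(p)\bigr)\Bigr|
&\ll \frac{M}{H(\log M)^{1-\kappa}}\\
&\quad + (\log M)^{\kappa-1}\sum_{h\le H}\frac{1}{h}\Bigl(1+\tfrac{nh}{M^2}\Bigr)\max_{N}|T(N)|\\
&\quad + (\log M)^{\kappa-1}\sqrt M\sum_{h\le H}\frac{1}{h}\Bigl(1+\tfrac{nh}{M^2}\Bigr).
\end{align*}
The final sum evaluates to $\log H + nH/M^2$, producing precisely the trailing terms $\sqrt M\log H/(\log M)^{1-\kappa}$ and $nH/(M^{3/2}(\log M)^{1-\kappa})$, while the middle line matches the main term in the statement.

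\medskip

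\noindent\textbf{Main obstacle.} The analytical content sits in Step 1: one must verify that $g(t)=(n-t)/(t(t-1))$ really behaves like a smooth function of small total variation on $(M,2M]$, so that the integration-by-parts only costs the factor $1+nh/M^2$ and does not interact badly with the $(\log p)^\kappa$ weight. Step 2 is more bookkeeping: handling $\kappa=0$ and $\kappa=1$ uniformly (the $(\log M)^{\kappa-1}$ factor) via Abel summation and checking that the prime-power remainders do not exceed the claimed $\sqrt M/(\log M)^{1-\kappa}$.
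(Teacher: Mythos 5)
Your proposal is correct and follows essentially the same route as the paper: apply Lemma~\ref{le35}, remove the factor $\e(hg(p))$ by partial summation at the cost of $1+nh/M^2$ (the paper uses the bound $|hg(p)|\le 2nh/M^2$ directly rather than $\int|g'|$, but this is the same estimate), then pass from $(\log p)^\kappa$ to $\Lambda(m)$ by a second Abel summation with an $O(\sqrt M)$ prime-power error, and finally sum over $h\le H$ to produce the trailing terms. No gaps.
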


\begin{proof}
From Lemma~\ref{le35},  
\begin{align*}
   &  \left | \sum_{ M < p \le 2M} (\log p)^\kappa \psi \( \frac{n}{p} + g(p) \)  \right | \\
    & \qquad \ll \frac{1}{H} \sum_{M < p \le 2M} (\log p)^\kappa + \sum_{h \le H} \frac{1}{h} \left | \sum_{M < p \le 2M} (\log p)^\kappa \e \( \frac{nh}{p} \) \e \( hg(p) \) \right | \\
    & \qquad \ll \frac{M}{H(\log M)^{1-\kappa}}  + \sum_{h \le H} \frac{1}{h} \left | \sum_{M < p \le 2M} (\log p)^\kappa \e \( \frac{nh}{p} \) \e \( hg(p) \) \right |.
\end{align*}
Now since 
$$\left | hg(p) \right | \le \frac{nh}{p(p-1)} \le \frac{2nh}{M^2} \quad \( M < p \le 2M \),$$
we get by Abel summation
\begin{align*}
   &  \left | \sum_{ M < p \le 2M} (\log p)^\kappa \e \( \frac{nh}{p} \) \e(hg(p)) \right | \\
   & \qquad \qquad  \ll  \( 1 + \frac{nh}{M^2} \) \max_{M < N \le 2M} \left | \sum_{ M < p \le N} (\log p)^\kappa 
   \e \( \frac{nh}{p} \) \right |,
\end{align*}
and by Abel summation again
\begin{align*}
   &  \left | \sum_{M < p \le N} (\log p)^\kappa \e \( \frac{nh}{p} \) \right | \\
   & \qquad \qquad  \le  \frac{2}{(\log M)^{1-\kappa}} \underset{M < L \le N}{\max} \left | \sum_{M \le p \le L} \( \log p \) \e \( \frac{nh}{p} \) \right | \\
   &  \qquad \qquad \ll  \frac{1}{(\log M)^{1-\kappa}} \( \underset{M < L \le N}{\max} \left | \sum_{M \le m \le L} \Lambda(m) \e \( \frac{nh}{m} \) \right | + \sqrt{N} \).
\end{align*}
The asserted estimate follows on putting everything
together.
\end{proof}

We also recall that using~\cite[Theorem~9]{gra} with $k=2$ we obtain
\begin{lemma}
\label{le5}
If $M \le x^{3/5}/5$, then, for any $N \in \( M,2M \right]$
$$\left | \sum_{M < m \le N} \Lambda(m) \e \( \frac{x}{m} \) \right | < 17 \( x^2 M^{19} \)^{1/24} (\log 16M)^{11/4}.$$
\end{lemma}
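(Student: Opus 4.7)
The plan is to recognize that Lemma~\ref{le5} is obtained as the $k=2$ specialization of the general exponential sum estimate in \cite[Theorem~9]{gra}, so the ``proof'' is a matter of substitution and verification rather than of producing genuinely new arguments. Concretely, the cited theorem treats sums of the form $\sum_{M < m \le N} \Lambda(m) \e(x/m)$ on dyadic intervals $(M, 2M]$, subject to an upper bound on $M$ in terms of $x$ that depends on an integer parameter $k \ge 1$; for each admissible $k$ it produces a bound of the shape $C_k \bigl(x^{a_k} M^{b_k}\bigr)^{1/N_k} (\log 16M)^{c_k}$, obtained by a Vaughan--Heath-Brown type decomposition into Type~I and Type~II bilinear pieces, followed by a Weyl--van der Corput $k$-th derivative test applied to the phase $x/m$.

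First I would locate the precise statement of \cite[Theorem~9]{gra} and set $k=2$. The admissibility range for $k=2$ should read $M \le x^{3/5}/5$, matching the hypothesis of Lemma~\ref{le5} exactly; this constraint arises because the Type~II estimate for the phase $x/m$ needs $x/M^{k+1}$ to exceed a certain threshold, which for $k=2$ becomes the cubic condition $x \gtrsim M^3$, rearranged as $M \le x^{1/3} \cdot M^{2/5}$ in the dyadic regime and simplified to $M \le x^{3/5}/5$ after tracking the numerical constants. I would then read off the exponents in the conclusion: $k=2$ yields $a_2 = 2$, $b_2 = 19$, $N_2 = 24$, $c_2 = 11/4$, and the numerical constant $17$, all of which I would verify by tracing through the proof of \cite[Theorem~9]{gra}.

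The only potential obstacles are purely clerical: I must confirm that the normalization of the von Mangoldt function $\Lambda$, of the additive character $\e(\cdot) = \exp(2\pi i \cdot)$, and of the dyadic range $(M, N]$ with $M < N \le 2M$ all coincide with Graham's conventions, and that the logarithmic factor emerges precisely as $(\log 16M)^{11/4}$ rather than in some equivalent but not identical form. There is no new mathematical content to supply beyond this bookkeeping, so the proof collapses to a single sentence of the form ``apply \cite[Theorem~9]{gra} with $k=2$'', which is in fact exactly what the text of the excerpt already asserts.
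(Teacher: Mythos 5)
Your proposal is exactly what the paper does: Lemma~\ref{le5} is stated in the text as an immediate consequence of \cite[Theorem~9]{gra} specialized to $k=2$, with no further argument supplied, so citing that theorem and checking that the conventions and exponents match is the entire proof. Your speculative reconstruction of where the condition $M\le x^{3/5}/5$ comes from is not needed (and is a bit garbled), but the substance --- ``apply \cite[Theorem~9]{gra} with $k=2$'' --- coincides with the paper.
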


\begin{lemma}
\label{le6}
Let $\kappa \in \{0,1\}$. There exists some absolute constant $c >0 $ such that, for any large real number $t > 1$ we have
$$\sum_{p > t} \frac{(\log p)^\kappa}{p(p-1)} = F_\kappa(t) + O \( t^{-1} \delta_c(t) \),$$
where $F_0(t) = \rE_1(\log t)$ and $F_1(t) = t^{-1}$.
\end{lemma}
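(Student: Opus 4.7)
The plan is to split
$$\frac{(\log p)^\kappa}{p(p-1)} = \frac{(\log p)^\kappa}{p^2} + \frac{(\log p)^\kappa}{p^2(p-1)},$$
and observe that the second sum is trivially bounded by $\int_t^\infty u^{-3}(\log u)^\kappa\, du \ll t^{-2}(\log t)^\kappa$, which is absorbed into the claimed error $t^{-1}\delta_c(t)$ for $t$ large. The real work is the evaluation of $S_\kappa(t) := \sum_{p>t} (\log p)^\kappa/p^2$, which we will obtain by Abel summation against the prime counting function $\pi(u)$ when $\kappa=0$, and against the Chebyshev function $\theta(u)=\sum_{p\le u}\log p$ when $\kappa=1$, together with the Prime Number Theorem in the form
$$\pi(u)=\li(u)+O\bigl(u\,\delta_c(u)\bigr),\qquad \theta(u)=u+O\bigl(u\,\delta_c(u)\bigr),$$
valid for some absolute constant $c>0$ by de la Vall\'ee Poussin.

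For $\kappa=1$, partial summation gives
$$S_1(t) = -\frac{\theta(t)}{t^2} + \int_t^\infty \frac{2\theta(u)}{u^3}\,\mathrm{d}u.$$
Inserting the asymptotic for $\theta(u)$, the main part is $-1/t + 2/t = 1/t$, and the error contribution is controlled by $\int_t^\infty u^{-2}\,\delta_c(u)\,\mathrm{d}u \ll t^{-1}\delta_c(t)$, since $\delta_c$ is non-increasing. For $\kappa=0$, analogously,
$$S_0(t) = -\frac{\pi(t)}{t^2} + \int_t^\infty \frac{2\pi(u)}{u^3}\,\mathrm{d}u.$$
Replacing $\pi(u)$ by $\li(u)$ costs at most $O(t^{-1}\delta_c(t))$ by the same tail bound, and a single integration by parts (using $\li'(u)=1/\log u$) yields
$$-\frac{\li(t)}{t^2} + \int_t^\infty \frac{2\li(u)}{u^3}\,\mathrm{d}u = \int_t^\infty \frac{\mathrm{d}u}{u^2\log u}.$$
The substitution $u=e^{s}$ then converts the right-hand side into $\int_{\log t}^\infty e^{-s}/s\,\mathrm{d}s=\rE_1(\log t)$, giving the desired identity.

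There is no substantial obstacle here: the argument is a standard Abel-summation deployment of PNT with classical error, and the only technical point is the bound $\int_t^\infty u^{-2}\delta_c(u)\,\mathrm{d}u \ll t^{-1}\delta_c(t)$, which follows at once from monotonicity of $\delta_c$. Thus a unified one-paragraph write-up can handle both cases simultaneously.
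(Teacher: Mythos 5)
Your proof is correct and follows essentially the same route as the paper: split off the $p^{-2}(p-1)^{-1}$ tail, apply partial summation with the Prime Number Theorem with classical error term, and integrate by parts to recognize $F_\kappa(t)$. The only cosmetic difference is that you summed against $\theta(u)$ in the case $\kappa=1$ while the paper runs a single computation against $\pi(u)$ with the weight $(\log u)^\kappa$ for both values of $\kappa$; both yield the stated error since $\int_t^\infty u^{-2}\delta_c(u)\,\mathrm{d}u \ll t^{-1}\delta_c(t)$.
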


\begin{proof}
We recall the Prime Number Theorem 
in the form
$$
\pi(u) = \li(u) + O\(u \delta_{c_0}(u)\),
$$
where $\li(u)$ is the  logarithmic integral 
$$
\li(u) = \int_{2}^u   \frac{\mathrm{d}t}{\log t} \quad \( u > 0 \), 
$$
the function $\delta_{c_0}$ is defined in 
\eqref{eq:delta}
and $c_0>0$ is an absolute constant (see~\cite[Theorem~12.2 and Eq.~(12.27)]{Ivi} or~\cite[Corollary~8.30]{IwKow}, for instance).
By partial summation and the Prime Number 
Theorem in the above form, we derive
\begin{align*}
   \sum_{p > t} & \frac{(\log p)^\kappa}{p(p-1)} = \sum_{p > t} \frac{(\log p)^\kappa}{p^2} + \sum_{p > t} \frac{(\log p)^\kappa}{p^2(p-1)} \\
   &\quad= - \frac{\pi(t)(\log t)^\kappa}{t^2} + \int_t^\infty \frac{\pi(u)\( 2 (\log u)^\kappa - \kappa (\log u)^{\kappa-1} \)}{u^3} \, \textrm{d}u \\
   &\quad \qquad + O \( \frac{1}{t^2 (\log t)^{1-\kappa}} \) \\
   &\quad= - \frac{(\log t)^\kappa}{t^2} \( \li(t) + O \( t \delta_{c_0}(t) \) \) \\
   &\quad  \qquad +  \int_t^\infty \frac{2 (\log u)^\kappa - \kappa (\log u)^{\kappa-1}}{u^3} \Big( \li(u) + O \( u \delta_{c_0}(u) \) \Big) \, \textrm{d}u \\
   &\quad= G(t) + O \( \frac{(\log t)^\kappa \delta_{c_0}(t)}{t} \),
\end{align*}
where
$$G(t)=- \frac{\li(t)(\log t)^\kappa}{t^2} + \int_t^\infty \frac{\li(u)\big( 2 (\log u)^\kappa - \kappa (\log u)^{\kappa-1} \big)}{u^3} \, \textrm{d}u.$$
 Now, integrating by parts we derive
$$ G(t) =  \int_t^\infty \frac{\textrm{d}u}{u^2 (\log u)^{1- \kappa}} = F_\kappa(t).$$
The result follows with any constant $c \in \( 0,c_0 \)$.
\end{proof}

\subsection{First step}

Let $\kappa \in \{0,1\}$ and define
$$\Sigma_\kappa = \sum_{\sqrt{n} < p \le n}
 \( \fl{\frac{n-1}{p-1}} - \fl{\frac{n}{p}} \)  (\log p)^\kappa.$$

Notice that, if $\sqrt{n} < p \le n$, then
$$0 \le \frac{n-p}{p-1} < \sqrt{n},$$
and hence,
$$\frac{n-1}{p-1} - \frac{n}{p} = 
\frac{n-p}{p(p-1)} < \frac{\sqrt{n}}{p} < 1,$$
so that
\begin{equation}
\label{eq:omega Sigma}
\omega \( \mathfrak{p}_n^+ \) = \Sigma_0 \quad \textrm{and} \quad \log~(\mathfrak{p}_n^+) = \Sigma_1.
\end{equation}
We split the sum into two subsums as
 $$
 \Sigma_\kappa =  S_1 + S_2,
 $$
 where 
\begin{align*}
 S_1 &= \sum_{\sqrt{n} < p \le \sqrt{n} \, \( \delta_c \( \sqrt{n} \) \)^{-1}} \( \fl{\frac{n-1}{p-1}} - \fl{\frac{n}{p}} \)  (\log p)^\kappa, \\
 S_2& =  \sum_{\sqrt{n} \, \( \delta_c \( \sqrt{n} \) \)^{-1} < p \le n}   \( \fl{\frac{n-1}{p-1}} - \fl{\frac{n}{p}} \)  (\log p)^\kappa, 
 \end{align*}
and  $c >0$ is the constant given in Lemma~\ref{le6}.

\subsection{The sum $S_2$}

We use Corollary~\ref{cor3} with $\alpha = \delta_c \( \sqrt{n} \)$ obtaining immediately
\begin{equation}
   S_2 \ll \sqrt{n} \, \delta_c \( \sqrt{n} \) (\log n)^\kappa \ll \sqrt{n} \, \delta_{c_1} \( \sqrt{n} \) \label{e3}
\end{equation}
for some $c_1 \in \( 0,c \right]$.

\subsection{The sum $S_1$}

Using 
$$
\frac{n-1}{p-1} =  \frac{n}{p} +   \frac{n-p}{p(p-1)}, 
$$
and recalling the definition of the function $\psi(x)$, we write 
$$
   S_1 =S_{11} - S_{12}, 
$$
where 
\begin{align*}
S_{11} &= \sum_{\sqrt{n} < p \le \sqrt{n} \, \( \delta_c \( \sqrt{n} \) \)^{-1}} \frac{(\log p)^\kappa(n-p)}{p(p-1)}, \\
S_{12} & =
 \sum_{\sqrt{n} < p \le \sqrt{n} \, \( \delta_c \( \sqrt{n} \) \)^{-1}} (\log p)^\kappa \( \psi \( \frac{n}{p} 
 +  \frac{n-p}{p(p-1)}\) - \psi \( \frac{n}{p} \) \) .
\end{align*}

\subsubsection*{The main term}
For any integer $n \ge 1$, we derive from Lemma~\ref{le6} that
$$
   \sum_{\sqrt{n} < p \le \sqrt{n} \( \delta_c (\sqrt{n}) \)^{-1}} \frac{(\log p)^\kappa}{p(p-1)} = F_\kappa \( \sqrt{n} \) - F_\kappa \(\frac{\sqrt{n}}{\delta_c (\sqrt{n})}\) + O \( \frac{\delta_c (\sqrt{n})}{\sqrt{n}} \).
$$
From~\cite[Eq.~(5.1.19)]{Abr}, we have the inequalities
$$\frac{e^{-x}}{x+1} < \rE_1 (x) < \frac{e^{-x}}{x} \quad \( x > 0 \),$$
which imply that
$$F_0 \( \frac{ \sqrt{n}}{ \delta_c (\sqrt{n})}   \) \ll  \frac{\delta_c \( \sqrt{n} \)}{\sqrt{n}\log(n/\delta_c (\sqrt{n})^2)} \ll  \frac{\delta_c(\sqrt{n})}{\sqrt{n}},$$
and also
$$F_0 \( \frac{ \sqrt{n}}{ \delta_c (\sqrt{n})}   \) \ll \frac{\delta_c(\sqrt{n})}{\sqrt{n}},$$
so that
$$\sum_{\sqrt{n} < p \le \sqrt{n} \( \delta_c (\sqrt{n}) \)^{-1}} \frac{(\log p)^\kappa}{p(p-1)} = F_\kappa \( \sqrt{n} \) + O \( 
\frac{\delta_c(\sqrt{n})}{\sqrt{n}} \).$$
Therefore,
\begin{equation}
\begin{split}
\label{e4}
   S_{11} &= n \sum_{\sqrt{n} < p \le \sqrt{n} \( \delta_c (\sqrt{n}) \)^{-1}} \frac{(\log p)^\kappa}{p(p-1)} - \sum_{\sqrt{n} < p \le \sqrt{n} \( \delta_c (\sqrt{n}) \)^{-1}} \frac{(\log p)^\kappa}{p-1}  \\
   &= n \sum_{\sqrt{n} < p \le \sqrt{n} \( \delta_c (\sqrt{n}) \)^{-1}} \frac{(\log p)^\kappa}{p(p-1)} + O \( \log n \)  \\
   &= n F_\kappa \( \sqrt{n} \) + O \( \sqrt{n}\, \delta_c \( \sqrt{n} \) \). 
 \end{split} 
 \end{equation}

\subsubsection*{The error term}
It remains to prove that, for $n$ sufficiently large,
\begin{equation}
   \left | S_{12} \right | \ll \sqrt{n}\, \delta_c \( \sqrt{n} \). \label{e5}
\end{equation}
This estimate follows from the next result, which actually gives  a power saving.

\begin{lemma}
\label{le7}
We have
$S_{12}\le n^{49/100+o(1)}$ as $n\rightarrow \infty$. 
\end{lemma}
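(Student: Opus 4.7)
The plan is to estimate each of the two $\psi$-sums composing $S_{12}$ separately via the Vaaler-type identity of Lemma~\ref{le4}, and then to bound the resulting exponential sums using Graham's estimate in Lemma~\ref{le5}.

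First I would split the range $\bigl(\sqrt{n},\sqrt{n}/\delta_c(\sqrt{n})\bigr]$ dyadically into $O((\log n)^{3/5})$ intervals $(M,2M]$ with $\sqrt{n}\le M\le n^{1/2+o(1)}$. On each dyadic block, the contribution to $S_{12}$ is the difference of $\sum_{M<p\le 2M}(\log p)^\kappa\psi(n/p+g(p))$ for the two choices $g(p)=(n-p)/(p(p-1))$ and $g(p)=0$. Applying Lemma~\ref{le4} to each of them separately produces a bound involving the main term $\sum_{h\le H}h^{-1}(1+nh/M^2)\max_N\bigl|\sum_{M<m\le N}\Lambda(m)\e(hn/m)\bigr|$ plus the error terms $M/H$, $nH/M^{3/2}$, and $\sqrt{M}\log H$, with the parameter $H\ge 1$ still at our disposal.

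Next I would invoke Lemma~\ref{le5} with $x=hn$; its hypothesis $M\le(hn)^{3/5}/5$ holds for large $n$ since $M\le n^{1/2+o(1)}$. This gives the exponential sum bound $\ll(h^2n^2M^{19})^{1/24}(\log M)^{O(1)}$. Performing the $h$-summation via $\sum_{h\le H}h^{-1}(1+nh/M^2)h^{1/12}\ll H^{1/12}+(n/M^2)H^{13/12}$, the bound for each dyadic block reduces (up to $n^{o(1)}$ factors) to
\begin{equation*}
(n^2M^{19})^{1/24}\bigl(H^{1/12}+(n/M^2)H^{13/12}\bigr)+\frac{M}{H}+\frac{nH}{M^{3/2}}+\sqrt{M}.
\end{equation*}

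The crux of the argument is the choice of $H$. The worst case is $M\asymp\sqrt{n}$, where $n/M^2\asymp 1$ and $(n^2M^{19})^{1/24}\asymp n^{23/48}$. Taking $H=n^{1/100}$ then balances $n^{23/48}H^{13/12}$ against $M/H$, both evaluating to $n^{49/100}$, whereas $nH/M^{3/2}=n^{26/100}$ and $\sqrt{M}=n^{1/4}$ are both strictly smaller. For larger $M$ within the dyadic range the factor $n/M^2\le 1$ only improves matters, so the same $H$ works uniformly. Summing over the $n^{o(1)}$ dyadic subintervals and over the two choices of $g$ preserves the bound, yielding $S_{12}\le n^{49/100+o(1)}$. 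The main obstacle is this simultaneous balancing of four competing terms: the choice $H=n^{1/100}$ is essentially forced by the two dominant terms, and one must verify that the remaining two terms are kept below the target bound for $M$ throughout the full dyadic range.
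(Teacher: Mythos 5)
Your argument is correct and follows essentially the same route as the paper: dyadic decomposition of $\bigl(\sqrt{n},\sqrt{n}\,\delta_c(\sqrt{n})^{-1}\bigr]$, Lemma~\ref{le4} for each choice of $g$, Lemma~\ref{le5} applied with $x=hn$, and then a choice of $H$ balancing the term $(n^2M^{19})^{1/24}(n/M^2)H^{13/12}$ against $M/H$. The only (immaterial) difference is that you fix $H=n^{1/100}$ while the paper takes $H=\lfloor M^{53/50}n^{-13/25}\rfloor$ on each block; since $M=n^{1/2+o(1)}$ throughout the range, the two choices coincide up to $n^{o(1)}$ and give the same final bound $n^{49/100+o(1)}$.
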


\begin{proof}
Split the interval $\( \sqrt{n} , \sqrt{n} \, \( \delta_c (\sqrt{n}) \)^{-1} \right]$ into $O (\log n)$ dyadic subintervals of the
form $\( M,2M\right]$, so that
\begin{align*}
   &  \left | \sum_{\sqrt{n} < p \le \sqrt{n} \, \( \delta_c (\sqrt{n}) \)^{-1}} (\log p)^\kappa \psi \( \frac{n}{p} + g(p) \) \right | \\
   &\qquad \qquad \ll   \underset{\sqrt{n} < M \le \sqrt{n} \, \( \delta_c (\sqrt{n}) \)^{-1}}{\max} \left | \sum_{M < p \le 2M} (\log p)^\kappa \psi \( \frac{n}{p} + g(p) \) \right | \log n,
\end{align*}
where, as in Lemma~\ref{le4}, 
$$
g(p) = 0 \qquad \text{or} \qquad g(p) = \frac{n-p}{p(p-1)}.
$$ 
From Lemmas~\ref{le4} and~\ref{le5}
\begin{align*}
   &  \left | \sum_{M < p \le 2M} (\log p)^\kappa \psi \( \frac{n}{p} + g(p) \) \right | \\
   &\qquad \  \ll   (\log M)^{7/4+\kappa}\sum_{h \le H} \frac{1}{h}\( 1 + \frac{nh}{M^2} \) \( h^2n^2 M^{19} \)^{1/24} \\
   &\qquad \qquad \qquad  + \frac{M}{H(\log M)^{1-\kappa}} + \frac{nH}{M^{3/2}(\log M)^{1-\kappa}} + \frac{\sqrt{M} \log H}{(\log M)^{1-\kappa}} \\
   &\qquad   \ll   \( \( n^{26}  H^{26} M^{-29} \)^{1/24} + \( n^2H^2M^{19} \)^{1/24} \) (\log M)^{11/4} \\
   & \qquad \qquad \qquad   + \frac{M}{H(\log M)^{1-\kappa}} +\frac{nH}{M^{3/2}} + \sqrt{M} \log H.
\end{align*}
Choose
$$H = \fl{M^{53/50} n^{-13/25}} $$
to balance the first and the third terms in the  above bound 
up to logarithmic factors, which we all replace by $n^{o(1)}$.
We also  note that, since $M \ge \sqrt{n}$, we have $H \ge 1$ provided 
that $n$ is large enough. Hence, 
\begin{align*}
   &  \left | \sum_{M < p \le 2M} (\log p)^\kappa \psi \( \frac{n}{p} + g(p) \) \right | \\
   & \qquad \quad \ll \( \( n^{26} M^{-3} \)^{1/50} + \( n M^{22} \)^{1/25}   + \( n^{12} M^{-11} \)^{1/25}  + M^{1/2} \)n^{o(1)}. 
\end{align*}
Using $M = n^{1/2 + o(1)}$ we obtain
$$
   \left | S_{12} \right |  \ll  \( n^{49/100}    + n^{13/50} \ + n^{1/4}\)n^{o(1)} 
 \ll  n^{49/100+ o(1)},
$$
as $n\rightarrow \infty$, 
concluding the proof.
\end{proof}

\subsection{Completion of the proof of Theorems~\ref{thm:3} and~\ref{thm:3.5}}

Follows at once from~\eqref{eq:omega Sigma}, \eqref{e3}, \eqref{e4} and~\eqref{e5}.

\section{The proof of Theorem~\ref{thm:4}}

\subsection{Preliminary considerations} 
\label{sec:prelim}

Let us fix some prime $p$ and  write $n$ in base $p$ as
$$
n=\sum_{i=0}^k a_i p^{k-i},
$$
with $p$-ary digits
$$
a_i\in \{0,\ldots,p -1\}, \qquad i=0, \ldots, k, \quad \text{and}\quad a_0\ne 0.
$$
In the above, $k=\fl{\log n/\log p}+1$.

We distinguish the following two cases:

\subsubsection*{Case 1:} $a_k\ne p-1$.

In this  case, 
$$
n+1=a_0p^{k}+\cdots+(a_k+1)
$$ 
is the base $p$-representation of $n+1$. Hence, $s_p(n+1)=s_p(n)+1$. 
In particular, the following events
\begin{equation}
\label{eq:div Pn}
p \mid  \fP_{n+1} \mand p \nmid \fP_n,
\end{equation}
and  thus, 
$$s_p(n+1) \ge  p >s_n(p),$$ 
are simultaneously   possible  only when 
\begin{equation}
\label{eq:snp p-1}
s_p(n) = p-1. 
\end{equation}

\subsubsection*{Case 2:} $a_k=p-1$.

Let $i\in [0,k]$ be such that $a_k=a_{k-1}=\cdots=a_{k-i}=p-1$, but $a_{k-i-1}<p-1$. Then 
$$
n+1=a_0p^k+\cdots+(a_{k-i-1}+1)p^{i},
$$ 
and we obtain
$$
s_p(n+1)  =  a_0+\cdots+a_{k-i-1}+1  \le  a_0+\cdots+a_{k-i-1}+a_k \le s_p(n).
$$
Hence, for each $p$ with $a_k = p-1$, if $p\mid  \fP_{n+1}$, then 
we also have $p\mid  \fP_n$. However, the opposite of~\eqref{eq:div Pn}, that is
\begin{equation}
\label{eq:div Pn+1}
p \nmid  \fP_{n+1} \mand p \mid \fP_n,
\end{equation}
 is also possible 
in this case. Actually this case plays an important role in our argument in Section~\ref{sec:proofi} below.

\subsection{Proof of~(i): divisibility}
\label{sec:proofi}

As we have seen, primes $p$ which belong to Case~2 of Section~\ref{sec:prelim}
do not appear in the denominator of the ratio $\fP_n/\fP_{n+1}$. We now show that 
for almost all $n\le x$ (with an explicit bound on the size of the exceptional set)
the primes which belong to Case~1 of Section~\ref{sec:prelim} do not appear 
in the denominator of this ratio either, and thus we have the desired divisibility.

More precisely, using the characterization of~\eqref{eq:div Pn} given by~\eqref{eq:snp p-1}, we 
 we conclude that  $\fP_{n+1}\mid \fP_n$ holds for all positive integers $n$ that have the property that there is no prime $p$ with
$s_p(n)=p-1$. It remains to prove that the complementary set has asymptotic density zero. 

Thus, we define
$$
\cA(x) =\{n\le x~:~s_p(n)=p-1~{\text{\rm for~some~prime}}~p\}.
$$

 First of all note that since $s_p(n)=p-1$, it follows that $(p-1)\mid n$. 
 McNew,  Pollack and  Pomerance~\cite{NPP}, improving on the previous result of 
 Erd\H os and Wagstaff~\cite{EW},  have shown that uniformly in $3\le y\le x$, 
\begin{equation}
\label{eq: p-1 | n}
\#\{n \le x~:~ (p-1) \mid n \ \text{ for some}\ p\ge y\} \ll \frac{x}{(\log y)^{\delta} (\log_2 y)^{1/2}},
\end{equation}
where $\delta$ is the Erd{\H o}s--Ford--Tenenbaum constant defined in ~\eqref{eq:ErdConst}.
We also recall that Ford~\cite{Ford} has recently established more precise results, which 
however cannot be used to improve our bounds. 

We take $y=\sqrt{\log x}$ in~\eqref{eq: p-1 | n}, getting that the number of $n\in \cA(x)$ such that $s_p(n)=p-1$ for some $p\ge y$ is 
\begin{equation}
\label{eq: large p}
 \frac{x}{(\log y)^{\delta} \sqrt{\log_2 y}} \ll \frac{x}{(\log_2 x)^{\delta} \sqrt{\log_3 x}}.
\end{equation}

Assume now that $p<y$. We remark that for each $p$ and
$$
k = \fl{\frac{\log n}{\log p}} +1,
$$ 
the condition $s_p(n)=p-1$ leads to 
the equation 
$$
a_0 +  \cdots +a_k=p-1
$$
on the $p$-ary digits $a_0, \ldots, a_k$ of $n$ (we possibly append some leading zeros to make all 
$p$-ary expansions of the same length). 
Thus, for each $p$ there are at most 
$$
\binom{k+p}{p} \le (k+p)^p \le (p +\log x)^p
$$
possible values for the string of digits $(a_0, \ldots, a_k)$, and therefore  for the number of such $n\in {\mathcal A}(x)$.
Hence, the total contribution from all $p \le y$ to $\#{\mathcal A}(x)$ is at most 
\begin{equation}
\label{eq: small p}
\sum_{p \le y}  (p+ \log x)^p \ll  (y + \log x)^y \le \exp\(2\,\sqrt{\log x}\, \log_2 x\),
\end{equation}
which is negligible when compared 
with~\eqref{eq: large p}, and concludes the proof.

\subsection{Proof of~(ii): divisibility and strict inequality}
\label{sec:proofii}
Since we have established that $\fP_{n+1}\mid \fP_n$ for almost all $n \le x$, 
it is enough to produce a sequence of $n$ of asymptotic density $\ln 2$ 
such that for each of them there exists a prime $p$ with the property~\eqref{eq:div Pn+1}.

Let $n\le x$ be of the form $n=ap-1$, where $p>\sqrt{x+1}$ and $a\ge 2$. Then $a\le (x+1)/p<\sqrt{x+1}< p$. 

In particular, $n+1=ap$ with $a<p$, so 
\begin{equation}
\label{eq: small s n+1}
s_p(n+1)=a<p.
\end{equation}
On the other hand, since $n=(a-1)p+p-1$, and $2\le a< p$, it follows that
\begin{equation}
\label{eq: large s n}
 s_p(n)=a-1+(p-1)=(a-2)+p\ge p.
\end{equation}
Comparing~\eqref{eq: small s n+1} and~\eqref{eq: large s n}, 
we see that we have~\eqref{eq:div Pn+1}.
Obviously, each integer $n \le x$ of this form can be generated by
only one prime  $p>\sqrt{x+1}$.  Fixing $p$, we have that 
$2\le a\le (x+1)/p$. Hence, there are $\fl{(x+1)/p}-1 = x/p + O(1)$ possibilities for $a$.
Thus, using the Mertens formula (see~\cite[Theorem~427]{HardyWright}),
 we see that we generate
 $$
\sum_{\sqrt{x+1}<p\le x+1} \( x/p + O(1)\) =  
 x \ln 2+o(x)
$$
such integers $n \le x$ as $x\to\infty$ in this way, concluding the proof.

\subsection{Proof of~(iii): equality}
We take $n=q-1$ for a prime $q \le x$. Clearly, 
$$s_q(n)=q-1<q \mand  s_q(n+1)=1<q.
$$ 
Thus,
$$
q\nmid \fP_n\fP_{n+1}.
$$
We remark that $q$ is the only prime that divides $n+1$. Thus,  Case~2
of Section~\ref{sec:prelim} is impossible for any prime $p\ne q$. 
Hence, the only primes   
in which $\fP_n$ and $\fP_{n+1}$ may differ are the  primes $p$ as in Case~1 
of Section~\ref{sec:prelim} for which $s_p(n)=p-1$. In particular, $(p-1)\mid (q-1)$ with $p\ne n$. It is shown in~\cite{LPP} that there exists a positive constant $c$ such that uniformly in $2\le y\le x$, 
we have an analogue of~\eqref{eq: p-1 | n} for shifted primes $q-1$ in place of $n$; that is, 
\begin{equation}
\label{eq: p-1 | q-1}
\#\{q \le x~:~ (p-1) \mid (q-1) \ \text{ for some}\ p\ge y\} \ll \frac{x}{\log x(\log y)^{c}},
\end{equation}
Again we take $y=\sqrt{\log x}$  and proceeding as in the last part of 
Section~\ref{sec:proofi},
in particular, using~\eqref{eq: small p} and thus ignoring the fact that $n = q-1$ is a shifted prime in this part of the argument,
together with~\eqref{eq: p-1 | q-1}, we conclude the proof.

\section*{Acknowledgements}

The authors are very grateful to Bernd Kellner and Jonathan Sondow for many 
useful comments on the initial version of the manuscript. 

F.L, P.M. and I.S. would like to thank  the Max Planck Institute for Mathematics in Bonn for 
its hospitality and support during the period when this paper was written.

The second  author was   supported in part
 by NRF (South Africa) Grants CPRR160325161141 and an A-rated researcher award
 and by CGA (Czech Republic) Grant 17-02804S. 
 
The fourth  author was  
supported in part by ARC (Australia)  Grant DP140100118.

\end{document}